\newtheorem{theorem}{Theorem}[section]
\newtheorem{proposition}{Proposition}[section]
\newtheorem{lemma}{Lemma}[section]
\newtheorem{corollary}{Corollary}[section]
\newtheorem{remark}{Remark}[section]
\theoremstyle{definition}
\newtheorem{definition}{Definition}[section]
\newtheorem{notation}{Notation}[section]
\newcommand{\T}{\varmathbb T}
\newcommand{\C}{\varmathbb C}
\newcommand{\R}{\varmathbb R}
\renewcommand{\S}{\varmathbb S}
\newcommand{\Z}{\varmathbb Z}
\newcommand{\zon}{\mathrm{zon}}
\newcommand{\N}{\varmathbb N}
\newcommand{\E}{\varmathbb E}
\newcommand{\an}[1]{\langle #1 \rangle}
\title{Construction of a Gibbs measure for the zonal Dirac equation}
\author{Anne-Sophie de Suzzoni\footnote{LaMME, université d'\'Evry Paris Saclay, UMR 8071}, Cyril Malézé\footnote{CMLS, école polytechnique, UMR 7640}}
\date{}
\begin{document}

\maketitle

\begin{abstract} 
We propose a framework to construct Gibbs measures for the Dirac equation. We consider the Dirac equation on the sphere with a "Hartree-type" nonlinearity. We consider a zonal model, that is the analog of a spherically symmetric model but on the sphere. We build a Gibbs measure for this model. With a compactness argument, we prove the existence of a random variable that is a weak solution to the Dirac equation and whose law is the Gibbs measure at all times.
\end{abstract}

\section{Introduction}

\subsection{The zonal Dirac equation and Gibbs measures}

We consider the following equation 
\begin{equation}\label{Dirac1} 
i\partial_t u = iD_d u + \an{u,u}_{\C^{2^{\lfloor \frac{d}{2} \rfloor}}} u
\end{equation}
where $D_d$ is a Dirac operator on a manifold of dimension $d$. The issue at stake is to build Gibbs measures for nonlinear Dirac equations and prove their invariance under the flow of said Dirac equations. For this purpose, we follow the method presented by Oh and Thomann in \cite{oh2018pedestrian}.

The question of the invariance of Gibbs measures for Hamiltonian PDEs has been widely investigated in the literature. It took its origine in the work by Lebowitz Rose and Speers \cite{lebowitz1988statistical}. One of the first works on this matter was done by Bourgain in \cite{bourgain1994periodic,bourgain1996invariant} for the nonlinear Schrödinger equation and in \cite{bourgain1997invariant} for the Gross-Pitaevskii equation. The work of Bourgain has been continued in many other articles. The question of Gibbs measure for the nonlinear Schrödinger has been the question of many works, for instance, one can cite \cite{burq2013long,lebowitz1988statistical,mckean1995statistical,nahmod2012invariant,thomann2010gibbs,tzvetkov2006invariant,tzvetkov2008invariant,zhidkov2003korteweg}. We also mention the works of Burq and Tzvetkov on supercritical wave equations \cite{burq2007invariant,burq2008random,burq2008random2}. Finally, several works on many other equations can be found in \cite{chapouto2024deep,de2014wave,deng2015invariance,deng2024invariant,gassot2022probabilistic,mckean1999statistical,oh2009invariant,oh2010invariance,oh2012interpolation,tzvetkov2010construction}.  We mention with a particular emphasis works about the Gibbs measure in 3d for nonlinear wave equation \cite{bringmann2024invariant} and on the Schrödinger equation on the 2 dimensional sphere \cite{burq2024probabilistic}, which seem to approach some type of criticality in these matters, namely, both works are very close to to a result that cannot be improved in terms of dimension or, if one prefers, of regularity.

Historically, the introduction of Gibbs measure allowed to access an invariant to a given Hamiltonian equation whose regularity is below the energy space. This permits to prove global well-posedness below the regularity of the energy. It was later used to improve results of  well-posedness results for regularity below the critical one : by introducing Gibbs measures, we consider random initial datum with Fourier coefficients randomly distributed and expect to have better well-posedness results than in the deterministic case. Gibbs measures allow, in this sense, to introduce a particular case of problems with random initial datum.    

Although there exists some literature on Gibbs measures on noncompact domains \cite{bourgain2000invariant,bringmann2025invariant,mckean1999statistical,cacciafesta2015invariant,cacciafesta2020invariance}, it remains relatively scarce and is often restricted to the real line. The literature for Gibbs measures on compact manifolds is of a greater extent, and the results are somewhat stronger. Therefore, noncompact domains might not be such a natural starting point for studying Gibbs measures for Dirac equations and thus we focus on compact manifolds. For many reasons, we have chosen to work on the sphere. One of them is that the spectral theory of the Dirac operator on the sphere is well known. It is also boundary-free which avoids technicalities regarding physically relevant boundary conditions. But the main reason is that it has a lot of symmetries that one can use to reduce the dimension. The bigger the dimension is, the more difficult it becomes to build the measure and this becomes crucial for Dirac equations. 

Even on the sphere, which is a relatively nice manifold, there are many problems to build a Gibbs measure for \eqref{Dirac1}. The first one we raise is the order of $D_d$. It is an operator of order 1 and a good analog for equation \eqref{Dirac1} is the semiwave equation on the torus
\[
i\partial_t u = \sqrt{-\Delta} u + |u|^2u.
\]
Even in dimension 1, one needs to renormalize the equation to make sense of the Gibbs measure. Indeed, the Gaussian field on which is based the Gibbs measure, namely
\[
\sum_{n \in \Z} \frac1{\sqrt{1+|n|}} e^{inx} g_n
\]
where $(g_n)_n$ is an iid sequence of centered and normalised Gaussian variables, has values in $H^{-s}(\varmathbb T)$ for any $s>0$ but almost surely not $L^2(\T)$. Hence one cannot make sense of the product $|u|^2 u$ and needs to renormalise to give it a sense. In higher dimension, the regularity is even lower. The analog in the literature would be the Schrödinger equation in dimension 3, for which the invariance of the Gibbs measure remains unsolved to our best knowledge. Hence, we look for a model that is in dimension 1. Unfortunately, the Dirac equation on $\varmathbb S^1$ is not very relevant, both physically and mathematically. To bypass this issue, we use the symmetries of the sphere and work on zonal symmetry. On $\R^3$, there is a radial model for the Dirac equation which is called (somewhat abusevively) the Soler model. It uses the fact that the Dirac operator enjoys a so-called \emph{partial wave decomposition} which means mainly that the diagonalisation of the Dirac operator on the sphere $\S^2$ may be computed thanks to spherical harmonics, that $\R^3$ might be seen as a warped product between $\R^+$ and $\S^2$ and that the action of the Dirac operator may be decomposed between its action on the radial variable and the angular variable. In this context the lowest spherical harmonics components are stable under the action of the Dirac operator, but they also enjoy that  
\[
\an{u,u}_{\C^{2^{\lfloor \frac{d}{2} \rfloor}}},
\]
depends only on the radial variable, where $\an{\cdot,\cdot}_{\C^{2^{\lfloor \frac{d}{2} \rfloor}}}$ is the canonical scalar product on $\C^{2^{\lfloor \frac{d}{2} \rfloor}}$. Therefore, the nonlinear Dirac equation \emph{a priori} preserves the lowest spherical harmonics. But $\S^d$ may also be seen as the warped product between $(0,\pi)$ and $\S^{d-1}$ and thus we have a similar partial wave decomposition. We call this model \emph{zonal} rather than radial. We give a proper definition in Section \ref{sec:defzonDirac}. 

As we have already mentioned, even in dimension 1, one needs to renormalise the equation to give it a sense; this transforms equation \eqref{Dirac1} into
\begin{equation}\label{Dirac2}
i\partial_t u = iD_d u + :\an{u,u}_{\C^{2^{\lfloor \frac{d}{2} \rfloor}}} u :
\end{equation}
where $:\an{u,u}_{\C^{2^{\lfloor \frac{d}{2} \rfloor}}} u :$ is the distribution limit of $(\an{\Pi_N u, \Pi_N u}_{\C^{2^{\lfloor \frac{d}{2} \rfloor}}} - \sigma_N )\Pi_N u $ where $\Pi_N$ projects onto spherical harmonics of degree less than $N$ and $\sigma_N$ is a well-chosen function, see Subsection \ref{subsec:renormalizednonlin}.

Equation \eqref{Dirac2} restricted to zonal functions presents some analogies with the Schrödinger equation on the sphere $\S^2$ that we now comment. Burq, Camps, Sun and Tzvetkov, \cite{burq2024probabilistic} have developed a strategy that allows to obtain results on an equation with a local nonlinearity (such as we have presented so far). But on the sphere, they use a finely tuned basis (with an argument inspired by \cite{burq2013injections}) of eigenspaces of the Laplace-Beltrami operator. This in a broad sense allows to improve the $L^2 \rightarrow L^p$ injection norm satisfied by the basis. As we know, spherical harmonics saturate this norm. We also mention that Gibbs measures on the sphere seems harder to handle than their analog on the torus, see \cite{burq2025second}. Unfortunately, in our case, the zonal symmetry prescribes the basis and, even worse, our basis saturates the $L^2 \rightarrow L^p$ injection norm. To circumvent this issue, we use a nonlocal nonlinearity. Soler models include Hartree-type nonlinearities, that is 
\[
(W \an{u,u}_{\C^{2^{\lfloor \frac{d}{2} \rfloor}}}) u
\]
where $W$ is a self-adjoint operator that send the space of zonal functions into itself. The quantity $(W \an{u,u}_{\C^{2^{\lfloor \frac{d}{2} \rfloor}}}) $ remains zonal and thus the "zonality" is preserved by the Soler model. 

% In our case, we replace $(W \an{u,u}_{\C^{2^{\lfloor \frac{d}{2} \rfloor}}}) $ by 
% \[
% x\mapsto \int_{\S^d} w(x,y) \an{u,u}_{\C^{2^{\lfloor \frac{d}{2} \rfloor}}}(y) dy
% \]
% and to preserve the zonality, we ask that $w(Rx,Ry) = w(x,y)$ for any rotation $R$ that stabilises the poles of the sphere $\S^d$. 
We recall that one needs to renormalize, which is a bit more involved in this context, we define the renormalisation in Section \ref{sec:Wick}. Note that the definition is slightly different from the one in \cite{deng2021invariant}, this is due to the fact that our interaction potential is not necessarily a Fourier multiplier. They would coincide in the case of a Fourier multiplier.

Finally, the Dirac operator is not bounded by below. This is an issue we did not manage to solve yet. We arbitrarily project onto the positive spectrum of the Dirac operator. We aim at handling this problem, for instance using Bogoliubov-Dirac-Fock (see \cite{chaix1989quantum, hainzl2005existence,borrelli2025global}) models in future works. Our equation is now
\begin{equation}\label{DiracFin}
i\partial_t u = iD_d u + P_+ [ :(W \an{u,u}_{\C^{2^{\lfloor \frac{d}{2} \rfloor}}}) u : ]
\end{equation}
where $P_+$ is the orthogonal projection onto the positive spectrum of $D_d$ and where $u$ is a zonal function.

\subsection{Main result}

As mentioned above, we follow the method depicted in \cite{oh2018pedestrian}, to construct a Gibbs measure for the Dirac zonal equation. To introduce the Gibbs measure, we set $\mu$ the probability measure on $H^{s}(\S^d)$, for $s<-\frac{1}{2}$ induced by the map 
\[
\omega\in\Omega \mapsto u(x)=u(x;\omega)=\sum_{n\in\N}\frac{g_n(\omega)}{\lambda_n}e_n,
\]
where $(g_n)_{n\in \N}$ is a sequence of independent standard complex-valued Gaussian random variables on a probability space $(\Omega,\mathcal{F},\mathbb{P})$, $\lambda_n:=\sqrt{\frac{d}{2}+n}$ for $n\in\N$ and $(e_n)_{n\in \N}$ is the family $(e^+_{d,n})_{n\in\N}$ defined in Section \ref{sec:defzonDirac}. 

%We also denote the hermitian product $\langle f,g\rangle_{\S^d}:=\int_{\S^d} \bar f(x)g(x)dx$.

In this paper, we construct a density measure with respect to $\mu$ denoted $\rho_\infty$ such that the following theorem holds

\begin{theorem}\label{th:mainth}
Let $s<-\frac{1}{2}$ and $q>12d$. We set $p\in [2,\infty)$ such that $\frac1p + \frac1q = \frac12$.
    
Let $W$ be a non negative integral operator whose kernel is non negative, which is continuous from $L^{p/2}(\S^d,\C)$ into $L^q(\S^d,\C)$ and which preserves zonality.

%There exists a set $\Sigma$ of full mesure with respect to $\rho_\infty $ such that for all $\psi_0\in\Sigma$ , there exists a global in time solution in $\C(\R,H^s(\S^d))$ of equation \eqref{DiracFin} with initial data $\psi_0$. 

There exists a non trivial measure $\rho_\infty$ absolutely continuous with regard to $\mu$ and a random field $X$ with values in $\C(\R,H^s(\S^d))$ quch that $X$ is a solution in the distributional sense to \eqref{DiracFin} and such that for all $t\in \R$, the law of $X(t)$ is $\rho_\infty$.
\end{theorem}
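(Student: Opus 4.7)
The approach follows the Bourgain--Oh--Thomann paradigm: construct finite-dimensional Hamiltonian truncations, prove invariance of truncated Gibbs measures via Liouville, pass to the limit by tightness.

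\textbf{Step 1 (truncation and finite-dimensional invariance).} For each $N\in\N$, let $\Pi_N$ denote the projection onto zonal spherical harmonics of degree at most $N$. Consider the truncated Hamiltonian ODE
\[
i\partial_t u_N = i D_d u_N + \Pi_N P_+\bigl[:(W \an{u_N,u_N}_{\C^{2^{\lfloor d/2 \rfloor}}}) u_N:\bigr]
\]
on $\mathrm{Range}(\Pi_N)$, whose Hamiltonian $H_N$ is smooth; the resulting flow $\Phi_N^t$ is global. Liouville's theorem, together with conservation of $H_N$ and the $L^2$ mass, yields invariance of the truncated Gibbs measure $\rho_N = Z_N^{-1} e^{-F_N}\,d\mu$, where $F_N$ is the Wick-renormalized interaction constructed in Section~\ref{sec:Wick}.

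\textbf{Step 2 (convergence of the Gibbs measures).} Using the Wick calculus of Section~\ref{sec:Wick}, the continuity $W\colon L^{p/2}(\S^d)\to L^q(\S^d)$, and Wiener-chaos hypercontractivity (the condition $q>12d$ is used here to gain integrability for the square of the Gaussian through the Hartree operator), show that $F_N\to F_\infty$ in $L^r(\mu)$ for some $r>1$ and that $(e^{-F_N})_N$ is uniformly integrable. This provides $\rho_\infty = Z_\infty^{-1} e^{-F_\infty}\,d\mu$ as the total-variation limit of the $\rho_N$, and gives the non-triviality claim.

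\textbf{Step 3 (tightness of the stationary processes).} Define the stationary process $X_N(t;\omega)= \Phi_N^t(u_{0,N}(\omega))$, where $u_{0,N}$ has law $\rho_N$. Invariance of $\rho_N$ and moment bounds on the Gaussian field in $H^{s}(\S^d)$ for $s<-1/2$ control $X_N$ uniformly in $L^\infty_t H^s_x$, while the equation together with the regularizing property of $W$ bounds $\partial_t X_N$ in a lower Sobolev space. A Kolmogorov-type criterion then gives tightness of the laws of $(X_N)$ in $C(\R, H^s(\S^d))$ (with the topology of uniform convergence on compacts). Prokhorov and Skorokhod yield a subsequence $X_{N_k}$ converging almost surely to some limit $X$, and $\mathrm{Law}(X(t))=\rho_\infty$ for every $t$ since $\rho_{N_k}\to\rho_\infty$.

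\textbf{Main obstacle and conclusion.} The hard step is identifying $X$ as a distributional solution of \eqref{DiracFin}. One must pass to the limit in $\Pi_{N_k} P_+[:(W\an{X_{N_k},X_{N_k}})X_{N_k}:]$ despite $X(t)\in H^{s}$ with $s<-1/2$, which excludes any pointwise product. The strategy is to test against a smooth $(t,x)$-function: the smoothing provided by $W$ (which maps the Wick square of the Gaussian field, defined in $L^{p/2}$, into $L^q$) makes the Hartree term $W\an{\Pi_{N_k}X_{N_k},\Pi_{N_k}X_{N_k}}-\sigma_{N_k}$ converge in $L^r(\mu)$ uniformly in time, and coupling with the almost sure $H^s$-convergence of $X_{N_k}$ together with uniform moment bounds coming from $\rho_{N_k}$ allows one to conclude by Vitali-type dominated convergence. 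The assumption $q > 12d$, dictating the integrability margin after three factors of $X_N$, is precisely what makes this uniform integrability available; this is the place where the Hartree (as opposed to local) structure and the restriction to the zonal sector are essential.
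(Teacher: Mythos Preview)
Your overall architecture---truncate, prove invariance of $\rho_N$ by Liouville, establish tightness in $C(\R,H^s)$, extract a limit via Prokhorov--Skorokhod, identify it as a distributional solution---is exactly the Oh--Thomann compactness strategy the paper follows. There is, however, a genuine gap in Step~2: the uniform integrability (in fact uniform $L^r(\mu)$ boundedness for every $r<\infty$) of $e^{-F_N}$ does not follow from $F_N\to F_\infty$ in $L^r(\mu)$ together with hypercontractivity. The missing ingredient is a Nelson-type \emph{deterministic} lower bound $\mathcal E_N(\psi)\ge -C_0 N^{3d/q}$, which the paper proves using both the pointwise nonnegativity of the kernel $w$ and the nonnegativity of $W$ as an operator (these are separate hypotheses). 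This polynomial-in-$N$ floor is then balanced against the hypercontractive Cauchy rate $\|\mathcal E_N-\mathcal E_M\|_{L^r(\mu)}\lesssim (r-1)^2 M^{-\nu/2}$ to yield a tail estimate $\mu(-\mathcal E_N>\lambda)\lesssim e^{-c\lambda^{1+}}$; the condition $q>12d$ enters precisely here, allowing $\nu\in(6d/q,1-6d/q)$ so that the resulting exponent exceeds $1$. You locate $q>12d$ in Step~2 but do not name this mechanism, and without it the construction of $\rho_\infty$ fails.

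Your handling of the limit in the nonlinearity is also organized differently and is more fragile than the paper's. You propose to separate the Wick square $W\an{\Pi_{N_k}X_{N_k},\Pi_{N_k}X_{N_k}}-\sigma_{N_k}$ (converging through the smoothing of $W$) from the remaining linear factor $X_{N_k}\in H^s$; but with $s<-\tfrac12$ the product of an $L^q$ function with an $H^s$ distribution is not continuous, so ``coupling'' the two convergences is not automatic. The paper instead treats the full renormalized trilinear object $F_N(\psi)=P_N[:(W|P_N\psi|^2)P_N\psi:]$ as a single element of third-order chaos and proves directly that $(F_N)_N$ is Cauchy in $L^2(\mu,H^s(\S^d))$. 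The passage to the limit in the equation then runs as $F_{N_k}(X_{N_k})-F(X)=[F_{N_k}-F](X_{N_k})+[F(X_{N_k})-F(X)]$: the first bracket is controlled by invariance of $\rho_{N_k}$ together with the $L^r(\mu,H^s)$ Cauchy property of $(F_N)$, and the second by an $\varepsilon/3$ argument through a fixed finite-dimensional $F_M$, which is Lipschitz on $H^s$ by equivalence of norms, combined with the almost sure $H^s$-convergence of $X_{N_k}$.
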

    
\begin{remark}
    We only show the existence of a Gibbs measure for equation \eqref{DiracFin} but we do not show invariance of the measure, as we do not have uniqueness of solutions of the equation, thus we do not have the existence of a flow for the equation. The only thing that we can say, is that $\rho_\infty$ is a limit of invariant Gibbs measures associated to the truncated equations associated to \eqref{DiracFin} (one can see Section \ref{sec:Wick} for a definition of the truncated equations).
    
Unlike the wave equation, the semilinear Dirac equation does not present some natural regularisation of the nonlinearity. Unlike the Schrödinger equation, we also do not have bilinear Strichartz estimates which allow to get an improvement in the regularity of the first Picard iterate of the solution, the closest to such property may be found in \cite{candy2019multi}. Local smoothing properties such as \cite{boussaid2011virial} is not really smoothing. There seems to be no simple mechanism that would trigger the almost sure well-posedness of the equation. Perhaps one would have to rely on a quasi-linear argument such as \cite{burq2024probabilistic}.
\end{remark}

\begin{remark} The Bogoliubov--Dirac--Fock \cite{hainzl2005existence,borrelli2025global} model provides a nonnegative energy but it is restricted to density matrices that are orthogonal projections and approximations of the Dirac sea. Although there exists some randomization of density matrices in the literature, \cite{hadama2023probabilistic}, they are not convincing to build Gibbs measures.
\end{remark} 

\begin{remark}
Let $w$ be the integral kernel of $W$. If for all $x,y \in \S^2$ and for all $R \in \mathrm{SO}_n$ that leaves invariant the poles, we have that
\[
w(Rx,Ry) = w(x,y),
\]
then $W$ preserves zonality.

Since it is non negative, it is symmetric and we may chose $w(x,y) = w(y,x)$.

If $w \in L^{q}_x,L^{q/2}_y$ then $W$ maps continuously $L^{p/2}$ to $L^{q}$. 
\end{remark}

%\subsection{Notations}

%\subsection{Acknowledgments}

\section{Definition of the zonal Dirac operator}\label{sec:defzonDirac}

In this section, we give the minimal ingredients from \cite{camporesi1996eigenfunctions} to understand the construction of the Dirac operator on the sphere and its diagonalisation. 

More precisely, the sphere $\S^d$ has a warped product structure. It may be seen as the warped product between $[0,\pi]$ and the sphere $\S^{d-1}$. Because of this, the Dirac operator decomposes as something acting on $[0,\pi]$ and its version on the sphere $\S^{d-1}$. It is well known (see \cite{thaller2013dirac}) that the Dirac operator on the sphere admits a partial wave decomposition, that is to say that we may diagonalise it thanks to spherical harmonics. The function that acts on $\S^{d-1}$ as in the lowest spherical harmonics are called \emph{zonal}, the action of the Dirac operator on zonal function is called the \emph{zonal Dirac operator}. We now make all these statements rigorous.

\begin{definition}[Convention for $\Gamma$ matrices]
We define $(\Gamma_d^j)_{1\leq j\leq d}$ as a family of $\mathcal{M}_{2^{\lfloor \frac{d}{2}\rfloor}}(\C)$ by induction in the following way. First, we set
\[
\Gamma_1^1=1.
\]
Then, if $d>1$ is even, we set 
\[
\Gamma_d^d=\begin{pmatrix}
        0 & 1 \\ 1 & 0
    \end{pmatrix},
\]
with blocks of size $2^{\lfloor \frac{d-1}{2}\rfloor}$; for $j<d$, we set 
\[
\Gamma_d^j=\begin{pmatrix}
        0 & i\Gamma_{d-1}^j \\ -i\Gamma_{d-1}^j & 0
    \end{pmatrix}.
\]

If $d>1$ is odd, we set 
\[
\Gamma_d^d=\begin{pmatrix}
        1 & 0 \\ 0 & -1
    \end{pmatrix}
\]
with blocks of size $2^{\lfloor \frac{d-1}{2}\rfloor-1}$; for $j<d$, we set $\Gamma_d^j=\Gamma_{d-1}^j$.
\end{definition}

\begin{proposition}
    We have $\{ \Gamma_d^i,\Gamma_d^j\}:=\Gamma_d^i\Gamma_d^j+\Gamma_d^j\Gamma_d^i=2\delta^{ij},$ and $(\Gamma_d^i)^*=\Gamma_d^i$.
\end{proposition}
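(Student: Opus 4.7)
The plan is a straightforward induction on $d$. The base case $d=1$ is immediate because $\Gamma_1^1 = 1$ trivially satisfies both $\{1,1\} = 2$ and $1^* = 1$. For the inductive step, I assume both properties hold for $\Gamma_{d-1}^j$ and split according to the parity of $d$, since the recursive definition treats the two cases differently.

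For $d$ even, the matrices $\Gamma_d^j$ with $j < d$ are block off-diagonal with off-diagonal blocks $\pm i\,\Gamma_{d-1}^j$, while $\Gamma_d^d$ is the block swap. Self-adjointness of $\Gamma_d^j$ is equivalent to self-adjointness of $\Gamma_{d-1}^j$, since the factors $\pm i$ are placed precisely so that the two cross blocks are Hermitian conjugates of one another; and $\Gamma_d^d$ is manifestly Hermitian. The Clifford relations reduce to a block computation: a product of two block off-diagonal matrices is block diagonal, so for $j,k<d$ the anticommutator $\{\Gamma_d^j,\Gamma_d^k\}$ reduces to $\{\Gamma_{d-1}^j,\Gamma_{d-1}^k\}$ on each diagonal block, yielding $2\delta^{jk} I$ by induction. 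The mixed anticommutator $\{\Gamma_d^j,\Gamma_d^d\}$ vanishes by a direct $2\times 2$ block multiplication that produces two off-diagonal contributions of opposite sign.

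For $d>1$ odd, the matrices $\Gamma_d^j$ with $j<d$ coincide with $\Gamma_{d-1}^j$, so self-adjointness and the Clifford relations among these carry over immediately from the inductive hypothesis. The new element $\Gamma_d^d = \mathrm{diag}(I,-I)$ is visibly Hermitian with square $I$. The only substantive step is $\{\Gamma_d^j,\Gamma_d^d\}=0$ for $j<d$: here I use that $d-1$ is even, so by the previous case every $\Gamma_{d-1}^j$ is block off-diagonal. Multiplying such a matrix on the right by $\mathrm{diag}(I,-I)$ negates its upper-right block while left multiplication negates its lower-left block, and the sum of the two products vanishes. The computation is elementary throughout; the only real obstacle is bookkeeping with block sizes, in particular checking once at the outset that for odd $d$ the prescribed block size $2^{\lfloor (d-1)/2\rfloor - 1}$ is consistent with $\Gamma_d^d$ lying in $\mathcal{M}_{2^{\lfloor d/2\rfloor}}(\C)$.
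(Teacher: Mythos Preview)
Your inductive argument is correct and is precisely the ``straightforward computation'' the paper alludes to (the paper itself gives no details and simply refers to \cite{camporesi1996eigenfunctions}). Your bookkeeping on the block sizes in the odd case is also right: for $d$ odd the blocks of $\Gamma_d^d$ and the off-diagonal blocks of $\Gamma_{d-1}^j$ both have size $2^{(d-3)/2}$, so the mixed anticommutator computation goes through as you describe.
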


\begin{proof}
    The proof follows from straightforward computation and may be found in Section 2 of \cite{camporesi1996eigenfunctions}.
\end{proof}

Here, we do not define the Dirac operator on curved backgrounds, but the general construction may be found in \cite{parker2009quantum} and a general strategy to see its decomposition in warped product with a compact manifold may be found in \cite{ben2022global} and references therein.

In the following proposition, we use the structure of warped product of the sphere to decompose it nicely. We follow the conventions of \cite{camporesi1996eigenfunctions}.

\begin{proposition}[Dirac operator on $\S^d$]
For $d=1$ the Dirac operator $D_1$ acts on the set $F_1 = \{ f\in \mathcal C^1(\R, \C) | \forall \phi \in\R,f(\phi+2\pi)=-f(\phi)\}$ as $D_1 = \partial_\phi$.

For $d>1$, we represent an element $\Omega_d\in \S^d$ as $\Omega_d=(\cos\theta_d,\sin\theta_d \Omega_{d-1})$, with $\theta_d\in [0,\pi],\ \Omega_{d-1}\in \S^{d-1}$. When $d$ is even, the Dirac operator $D_d$ acts on $F_d = \mathcal C^1((0,\pi),\C^2) \otimes F_{d-1}$ as 
\[
D_d=(\partial_{\theta_d}+\frac{d-1}{2}\cot\theta_d)\Gamma_d^d+\frac{1}{\sin\theta_d}\begin{pmatrix}
    0 & iD_{d-1} \\ -iD_{d-1} & 0
\end{pmatrix}.
\]

For $d>1$ odd, the Dirac operator $D_d$ acts on $F_d = \mathcal C^1([0,\pi],\C) \otimes F_{d-1}$ as 
\[
D_d=(\partial_{\theta_d}+\frac{d-1}{2}\cot\theta_d)\Gamma^d_d+\frac{1}{\sin\theta_d}D_{d-1}.
\]
\end{proposition}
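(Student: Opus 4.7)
The plan is to derive this formula directly from the general theory of spin geometry, using the warped product structure $g_d = d\theta_d^2 + \sin^2(\theta_d) g_{d-1}$ on $\S^d$ (with $g_{d-1}$ the round metric on $\S^{d-1}$). First I would set up a local orthonormal frame adapted to the decomposition: $e_d = \partial_{\theta_d}$ and $e_i = \frac{1}{\sin\theta_d}\tilde{e}_i$ for $1\leq i\leq d-1$, where $\{\tilde{e}_i\}$ is a local orthonormal frame on $\S^{d-1}$. A short Koszul computation shows that the only nontrivial Levi-Civita connection one-forms are $\omega^{i}{}_{d} = -\omega^{d}{}_{i} = \cot(\theta_d)\,\theta^i$, with $\theta^i$ dual to $\tilde{e}_i$; this is exactly the statement that the second fundamental form of the $\theta_d=$ const. hypersurface is $\cot(\theta_d)$ times its metric.

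Next I would lift this to the spinor bundle. The induction on $d$ is arranged precisely so that the $\Gamma_d^j$ matrices defined in the convention above furnish the correct Clifford module in each dimension: for $d$ even, spinors on $\S^d$ split into two copies of spinors on $\S^{d-1}$ (matching the off-diagonal block form of $\Gamma_d^j$ for $j<d$ and the antidiagonal form of $\Gamma_d^d$), while for $d$ odd they are equivalent to spinors on $\S^{d-1}$ with $\Gamma_d^d = \mathrm{diag}(1,-1)$ implementing the grading. One then applies the general formula
\[
D_d = \Gamma_d^a e_a + \tfrac{1}{4}\Gamma_d^a\,\omega_a{}^{bc}\,\Gamma_b\Gamma_c
\]
in this adapted frame. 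The radial piece gives $\Gamma_d^d \partial_{\theta_d}$; the spin-connection contribution, using $\Gamma_d^d\Gamma_d^i\Gamma_d^i = \Gamma_d^d$ and summing over $i=1,\dots,d-1$, produces exactly $\frac{d-1}{2}\cot(\theta_d)\Gamma_d^d$; and the tangential piece $\frac{1}{\sin\theta_d}\tilde{e}_i$ reassembles into $\frac{1}{\sin\theta_d}D_{d-1}$, pre-composed with the block structure dictated by $\Gamma_d^j = \begin{pmatrix} 0 & i\Gamma_{d-1}^j \\ -i\Gamma_{d-1}^j & 0\end{pmatrix}$ in the even case and by $\Gamma_d^j = \Gamma_{d-1}^j$ in the odd case.

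The base case $d=1$ would be treated separately: there is no spin connection contribution, and the nontrivial spin structure on $\S^1$ is precisely encoded in the antiperiodicity $f(\phi+2\pi)=-f(\phi)$ that defines $F_1$. This is the spin structure one must pick to make the inductive identifications consistent, since spinors on $\S^d$ for $d\geq 2$ restrict to antiperiodic sections along any great circle (equivalently, $\S^d$ for $d\geq 2$ is simply connected and carries a unique spin structure whose restriction to equatorial $\S^1$'s is the nontrivial one).

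The main obstacle is purely bookkeeping: matching signs, factors of $i$, and the alternation between even and odd $d$ in the Clifford module structure. Since the entire construction is carried out rigorously in Section~2 of \cite{camporesi1996eigenfunctions} with the same conventions as those adopted here, my proof would essentially reduce to verifying that the $\Gamma$-matrix recursion set up above reproduces their identifications and then citing the formula. The only genuine computation to run by hand is the identification $\tfrac{1}{4}\Gamma_d^a\omega_a{}^{bc}\Gamma_b\Gamma_c = \tfrac{d-1}{2}\cot(\theta_d)\Gamma_d^d$, which is a direct consequence of the anticommutation relation $\{\Gamma_d^i,\Gamma_d^j\} = 2\delta^{ij}$ established in the preceding proposition.
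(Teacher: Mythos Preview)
The paper does not prove this proposition at all: it is stated as a definition/fact with the sentence ``we follow the conventions of \cite{camporesi1996eigenfunctions}'' and a pointer to \cite{parker2009quantum,ben2022global} for the general construction on warped products. There is no argument given in the paper beyond these citations.

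Your sketch is therefore strictly more than what the paper does, and it is a correct outline of the computation carried out in \cite{camporesi1996eigenfunctions}: the warped-product orthonormal frame, the identification of the only nontrivial connection one-forms $\omega^{i}{}_{d}=\cot\theta_d\,\theta^i$, the spin-connection contraction
\[
\tfrac14\sum_{i=1}^{d-1}\Gamma_d^i\big(\Gamma_d^i\Gamma_d^d-\Gamma_d^d\Gamma_d^i\big)\cot\theta_d=\tfrac{d-1}{2}\cot\theta_d\,\Gamma_d^d,
\]
and the reassembly of the tangential part into $\frac{1}{\sin\theta_d}D_{d-1}$ via the block structure of the $\Gamma_d^j$ for $j<d$. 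One small point worth tightening if you write this out in full: the tangential piece also carries the intrinsic spin connection of $\S^{d-1}$, and you should say explicitly that this is what produces the full $D_{d-1}$ (not just the frame derivatives $\tilde e_i$); your phrase ``reassembles into $D_{d-1}$'' hides that step. Likewise, your $\theta^i$ should be the coframe dual to $e_i$ on $\S^d$, not to $\tilde e_i$ on $\S^{d-1}$, for the formula $\omega^{i}{}_{d}=\cot\theta_d\,\theta^i$ to be dimensionally consistent. These are bookkeeping issues, as you anticipated; the strategy is sound and matches the cited reference.
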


\begin{remark} Dirac operators are the only self-adjoint extensions of the operators written in partial derivatives as above. We write their domains $H^1(\S^d)$ but we want to emphasize that spinors are not invariant under rotations of angle $2\pi$, the action of a rotation of angle $2\pi$ on a spinor results in the change of sign of the spinor.

We want to precise our notation. If $d$ is even, a pure tensor in $F_d$ writes 
\[
\theta_d \mapsto \begin{pmatrix} f(\theta_d)\psi \\ g(\theta_d) \psi \end{pmatrix}
\]
where $f,g$ belong to $\mathcal C^1((0,\pi), \C)$ and $\psi \in F_{d-1}$. On these pure tensors, $D_d$ acts as 
\[
D_d\begin{pmatrix} f\psi \\ g\psi \end{pmatrix} = \begin{pmatrix} 
(\partial_{\theta_d}+\frac{d-1}{2}\cot\theta_d) g \psi + i \frac{g}{\sin \theta_d} D_{d-1} \psi \\
(\partial_{\theta_d}+\frac{d-1}{2}\cot\theta_d) f \psi - i \frac{f}{\sin \theta_d} D_{d-1} \psi
\end{pmatrix} . 
\]
\end{remark}

\begin{definition}[Zonal functions]
By induction, we define $\psi_d^\pm$ as \begin{itemize}
\item $\psi_1^\pm : \phi\mapsto e^{\pm i\phi/2}$; 
\item for $d>1$ even 
\[
\psi_d^\pm=\begin{pmatrix}
        \cos\frac{\theta_d}{2}\psi_{d-1}^-\\ \pm i \sin \frac{\theta_d}{2}\psi_{d-1}^-
    \end{pmatrix};
\]
\item for $d>1$ odd 
\[
\psi_d^\pm =\cos\frac{\theta_d}{2}(1+i\Gamma_d^d)\psi_{d-1}^-\pm i\sin \frac{\theta_d}{2}(1+i\Gamma_d^d)\psi_{d-1}^+.
\]
\end{itemize}
We remark that $\psi_d^\pm \in F_d$. 

For $d>1$ even (resp. odd) say that a function $\varphi \in \mathcal C((0,\pi),\C^2) \otimes F_{d-1}$ (resp. $\mathcal C((0,\pi),\C) \otimes F_{d-1}$) is zonal if it writes 
\[
\varphi : \theta_d \mapsto 
\begin{pmatrix} \varphi_+(\theta_d) \psi_{d-1}^- \\
i \varphi_-(\theta_d) \psi_{d-1}^-
\end{pmatrix} 
\]
\bigg(resp. 
\[
\varphi_+(\theta_d) (1+ i\Gamma_d^d) \psi_{d-1}^- +i\varphi_-(\theta_d) (1+ i\Gamma_d^d) \psi_{d-1}^+ .\bigg)
\]
In both cases, we set $\varphi = [\varphi_+, \varphi_-]$. We write the set of zonal functions $Z_d$.
\end{definition}

\begin{proposition}\label{prop:actionDiraczonal}
The image of $Z_d\cap F_d$  under the action of the Dirac operator is included in $Z_d$. What is more, the Dirac operator acts on zonal functions in the following way : 
\[
D_d([\varphi_+,\varphi_-]) = \Big[ i\Big(\partial_{\theta_d} + \frac{d-1}{2} \cot(\theta_d)+ \frac{d-1}2 \frac1{\sin \theta_d} \Big) \varphi_-, i\Big(-\partial_{\theta_d} - \frac{d-1}{2} \cot(\theta_d)+ \frac{d-1}2 \frac1{\sin \theta_d} \Big) \varphi_+ \Big].
\]
Finally, for all $\varphi \in Z_d$ and $d$ even, we have that $\an{\varphi,\varphi}_{\C^{2^{\lfloor \frac{d}{2} \rfloor}}} $ is proportional to $|\varphi_+|^2 + |\varphi_-|^2$ and $\an{\varphi,\Gamma_d^d \varphi}_{\C^{2^{\lfloor \frac{d}{2} \rfloor}}}$ belongs to $\mathcal C((0,\pi),\C)$ (it only depends on $\theta_d$).
\end{proposition}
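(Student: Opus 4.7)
The plan is an induction on $d$, in which I simultaneously establish the three claims of the proposition and the auxiliary eigenvalue identity
\[
D_{d-1}\psi_{d-1}^{\pm}=\pm i\tfrac{d-1}{2}\,\psi_{d-1}^{\pm}.
\]
This identity is the key algebraic input that makes the recursive formula for $D_d$ collapse to the claimed zonal action: whenever $D_{d-1}$ acts on a spinor of the form $f(\theta_d)\psi_{d-1}^{\pm}$, the induction hypothesis turns it into the scalar $\pm i\tfrac{d-1}{2}f(\theta_d)\psi_{d-1}^{\pm}$, and when combined with the prefactor $\tfrac{1}{\sin\theta_d}$ it produces exactly the $\tfrac{d-1}{2\sin\theta_d}$ term in the statement. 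The base case $d=1$ is immediate: $D_1\psi_1^{\pm}=\partial_\phi e^{\pm i\phi/2}=\pm\tfrac{i}{2}\psi_1^{\pm}$.

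For the inductive step with $d$ even, I plug $\varphi=\begin{pmatrix}\varphi_+\psi_{d-1}^-\\ i\varphi_-\psi_{d-1}^-\end{pmatrix}$ into the recursive formula for $D_d$. The term $(\partial_{\theta_d}+\tfrac{d-1}{2}\cot\theta_d)\Gamma_d^d$ swaps the two components via $\Gamma_d^d$ and differentiates the scalar factors $\varphi_\pm$, while the off-diagonal term $\tfrac{1}{\sin\theta_d}\begin{pmatrix}0 & iD_{d-1}\\ -iD_{d-1} & 0\end{pmatrix}$ replaces $D_{d-1}$ by the scalar $-i\tfrac{d-1}{2}$ on $\psi_{d-1}^-$. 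Tracking the factors of $i$ and summing the two contributions on each component yields exactly the claimed formula, and the output is again of the form $[\,\cdot\,,\,\cdot\,]\cdot\psi_{d-1}^-$, so $Z_d\cap F_d$ is stable. The odd case proceeds analogously, except that the zonal function now involves both $\psi_{d-1}^+$ and $\psi_{d-1}^-$ through the factors $(1+i\Gamma_d^d)$; one applies the inductive eigenvalue identity in both signs and uses the elementary relation $\Gamma_d^d(1+i\Gamma_d^d)=\Gamma_d^d+i$ to rewrite the outcome in the form $\varphi_+(1+i\Gamma_d^d)\psi_{d-1}^-+i\varphi_-(1+i\Gamma_d^d)\psi_{d-1}^+$ with the announced formula on $[\varphi_+,\varphi_-]$.

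Once the action formula is established at step $d$, the eigenvalue identity for $\psi_d^{\pm}$ follows by applying it with $\varphi_+=\cos(\theta_d/2)$ and $\varphi_-=\pm\sin(\theta_d/2)$ (and analogous choices in the odd case). The half-angle identities $\tfrac{1+\cos\theta}{\sin\theta}=\cot(\theta/2)$ and $\tfrac{1-\cos\theta}{\sin\theta}=\tan(\theta/2)$ give
\[
\Big(\partial_\theta+\tfrac{d-1}{2}\cot\theta+\tfrac{d-1}{2\sin\theta}\Big)\sin(\theta/2)=\tfrac{d}{2}\cos(\theta/2),
\]
together with the symmetric identity on $\cos(\theta/2)$, and hence $D_d\psi_d^{\pm}=\pm i\tfrac{d}{2}\psi_d^{\pm}$, closing the recursion.

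For the final statement on scalar products when $d$ is even, direct expansion of $\varphi=[\varphi_+,\varphi_-]$ gives
\[
\an{\varphi,\varphi}_{\C^{2^{d/2}}}=|\psi_{d-1}^-|^2\big(|\varphi_+|^2+|\varphi_-|^2\big),\qquad \an{\varphi,\Gamma_d^d\varphi}_{\C^{2^{d/2}}}=-2|\psi_{d-1}^-|^2\,\mbox{Im}(\overline{\varphi_+}\varphi_-).
\]
A short side induction shows that $|\psi_d^{\pm}|^2$ is a constant: $|\psi_1^{\pm}|^2=1$; the even step $\psi_d^{\pm}=\begin{pmatrix}\cos(\theta_d/2)\psi_{d-1}^-\\\pm i\sin(\theta_d/2)\psi_{d-1}^-\end{pmatrix}$ preserves the norm by the Pythagorean identity on half-angles; and in the odd step the cross term cancels because of the purely imaginary coefficient $\pm i$ in front of the $\sin(\theta_d/2)$ summand. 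Both assertions follow. I expect the main technical obstacle to be the bookkeeping in the odd case, where one must carefully disentangle how $\Gamma_d^d$, the factors $(1+i\Gamma_d^d)$, and the two sectors $\psi_{d-1}^{\pm}$ interact in order to identify the new $\varphi_+$ and $\varphi_-$ on the correct sides after applying $D_d$.
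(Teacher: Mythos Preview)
Your proposal is correct and follows essentially the same approach as the paper: a simultaneous induction on $d$ establishing both the action formula and the eigenvalue identity $D_{d-1}\psi_{d-1}^{\pm}=\pm i\tfrac{d-1}{2}\psi_{d-1}^{\pm}$, with the latter serving as the key algebraic input at each step. The paper in fact packages this eigenvalue identity together with the constancy of $|\psi_d^{\pm}|^2$ and the relation $\Gamma_{d+1}^{d+1}\psi_d^{\pm}=\psi_d^{\mp}$ (for $d$ even) into a separate lemma and proves the proposition and lemma jointly, exactly as you outline. One small point: in your odd-case sketch you will also need the identity $\Gamma_d^d\psi_{d-1}^{\pm}=\psi_{d-1}^{\mp}$ (established at the previous even step) in addition to $\Gamma_d^d(1+i\Gamma_d^d)=\Gamma_d^d+i$, in order to recognise the output as a combination of $(1+i\Gamma_d^d)\psi_{d-1}^{\mp}$; this is precisely the ``bookkeeping'' you flag at the end.
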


\begin{lemma}\label{lem:actionDiracZonal} We have for all $d\in \N^*$,
    \begin{enumerate}
        \item $D_d\psi_d^\pm=\pm i \frac{d}{2}\psi_d^\pm$;
        \item $\langle \psi_d^\pm,\psi_d^\pm\rangle_{\C^{2^{\lfloor \frac{d}{2}\rfloor}}}$ is a constant; 
        \item if $d$ is even, $\Gamma_{d+1}^{d+1}\psi_{d}^\pm=\psi_d^\mp$.
    \end{enumerate}
\end{lemma}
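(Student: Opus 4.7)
The plan is to establish all three points simultaneously by induction on $d$, exploiting the recursive structure of the definitions of $D_d$, $\psi_d^\pm$ and $\Gamma_d^d$. The base case $d=1$ is immediate: $D_1\psi_1^\pm = \partial_\phi e^{\pm i\phi/2} = \pm\tfrac{i}{2}\psi_1^\pm$, $|\psi_1^\pm|^2\equiv 1$, and (3) is vacuous since it concerns only even $d$.

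For the inductive step when $d>1$ is even, $\psi_d^\pm$ is the pure tensor $[\cos(\theta_d/2),\pm i\sin(\theta_d/2)]^t\otimes\psi_{d-1}^-$. Applying the pure-tensor formula for $D_d$ from the preceding remark together with the inductive relation $D_{d-1}\psi_{d-1}^- = -i\tfrac{d-1}{2}\psi_{d-1}^-$, each component of $D_d\psi_d^\pm$ becomes a combination of $\cos(\theta_d/2)$, $\sin(\theta_d/2)$, and these multiplied by $\cot\theta_d$ or $1/\sin\theta_d$. The half-angle identities $\sin\theta_d = 2\sin(\theta_d/2)\cos(\theta_d/2)$ and $\cos\theta_d = \cos^2(\theta_d/2)-\sin^2(\theta_d/2)$ make the $\cot\theta_d$ and $1/\sin\theta_d$ contributions telescope into $\pm i\tfrac{d}{2}$ times the corresponding component of $\psi_d^\pm$, which proves (1). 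Point (2) reduces to $(\cos^2(\theta_d/2)+\sin^2(\theta_d/2))\|\psi_{d-1}^-\|^2 = \|\psi_{d-1}^-\|^2$, constant by induction. Point (3) is a one-line block computation: $\Gamma_{d+1}^{d+1} = \mathrm{diag}(I,-I)$ flips the sign of the lower block, which swaps $\psi_d^+$ and $\psi_d^-$.

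When $d>1$ is odd, I would first invoke the already-established (3) in dimension $d-1$, namely $\Gamma_d^d\psi_{d-1}^\pm = \psi_{d-1}^\mp$, to rewrite
\[
\psi_d^\pm = (c\mp s)\psi_{d-1}^- + i(c\pm s)\psi_{d-1}^+, \qquad c := \cos(\theta_d/2),\ s := \sin(\theta_d/2).
\]
Then, applying $D_d = (\partial_{\theta_d}+\tfrac{d-1}{2}\cot\theta_d)\Gamma_d^d + \tfrac{1}{\sin\theta_d}D_{d-1}$ and the inductive relations $D_{d-1}\psi_{d-1}^\pm = \pm i\tfrac{d-1}{2}\psi_{d-1}^\pm$, I would collect the coefficients of $\psi_{d-1}^+$ and $\psi_{d-1}^-$ separately. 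The crucial simplifications are $\cos\theta_d = (c-s)(c+s)$ and $(c\pm s)^2 - 1 = \pm\sin\theta_d$, which combine the $\cot\theta_d$ and $1/\sin\theta_d$ terms so that each coefficient collapses to $\pm i\tfrac{d}{2}$ times the corresponding coefficient of $\psi_d^\pm$, giving (1). For (2), the identity $(1+i\Gamma_d^d)^\ast(1+i\Gamma_d^d) = 2I$ (from $(\Gamma_d^d)^2 = I$ and $(\Gamma_d^d)^\ast = \Gamma_d^d$) yields a diagonal contribution $2(\cos^2+\sin^2)\|\psi_{d-1}^\pm\|^2$, constant by induction, while the cross term has a purely imaginary prefactor multiplying $\langle\psi_{d-1}^-,\psi_{d-1}^+\rangle$; a direct computation from the even-case formula for $\psi_{d-1}^\pm$ shows that this scalar product equals $\cos(\theta_{d-1})\|\psi_{d-2}^-\|^2$, which is real, so the real part of the cross term vanishes.

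The main obstacle is the odd case of (1): because $\Gamma_d^d$ mixes $\psi_{d-1}^+$ and $\psi_{d-1}^-$, the inductive step requires carrying two coefficients through both $\partial_{\theta_d}$ and $D_{d-1}$, and one must verify that all the $\pm$ signs conspire to produce exactly the claimed eigenvalue. The telescoping mechanism is identical to the even case; the difficulty is purely in the bookkeeping.
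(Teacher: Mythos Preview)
Your proposal is correct and follows essentially the same inductive scheme as the paper: base case $d=1$, split into even/odd $d$, use the recursive definitions of $D_d$ and $\psi_d^\pm$ together with the inductive hypothesis $D_{d-1}\psi_{d-1}^\pm=\pm i\tfrac{d-1}{2}\psi_{d-1}^\pm$, and reduce via half-angle identities. The only organizational difference is that the paper proves the lemma jointly with Proposition~\ref{prop:actionDiraczonal} (the action of $D_d$ on general zonal functions $[\varphi_+,\varphi_-]$) and then specializes to $\varphi_+=\cos(\cdot/2)$, $\varphi_-=\pm\sin(\cdot/2)$, whereas you compute directly on $\psi_d^\pm$; in the odd case the paper keeps $(1+i\Gamma_d^d)$ factored and uses $\Gamma_d^d(1+i\Gamma_d^d)\psi_{d-1}^\pm=(1+i\Gamma_d^d)\psi_{d-1}^\mp$, while you first expand via~(3) --- both routes lead to the same trigonometric telescoping. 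One minor phrasing issue: in your odd-case argument for~(2), since $\langle\psi_{d-1}^-,\psi_{d-1}^+\rangle$ is real the full cross term $\pm 2ics(B-\bar B)$ vanishes identically, not just its real part.
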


\begin{remark}
    Note that this proposition and this lemma are proven in \cite{camporesi1996eigenfunctions} but not only for zonal functions. Therefore, the proof we summarise here in the context of zonal functions is somewhat simplified. We include it for seek of completeness.
\end{remark}

\begin{proof}[Proof of Proposition \ref{prop:actionDiraczonal} and Lemma \ref{lem:actionDiracZonal}] We prove both the lemma and the proposition at the same time by induction on $d$. For $d=1$, the proposition does not hold but we prove the lemma. We have
\[
D_1 \psi_1^\pm = \pm \frac{i}{2} \psi_1^\pm, \quad |\psi_1^\pm|^2 = 1
\]

Assume that the lemma is valid in dimension $d-1$ we prove the proposition and the lemma for dimension $d>1$. 

Case 1 : $d$ is even. We deduce that
\[
D_d[\varphi_+,\varphi_-] = \begin{pmatrix} 
(\partial_{\theta_d}+\frac{d-1}{2}\cot\theta_d) i\varphi_- \psi^-_{d-1} + i \frac{i \varphi_-}{\sin \theta_d} D_{d-1} \psi^-_{d-1} \\
(\partial_{\theta_d}+\frac{d-1}{2}\cot\theta_d) \varphi_+ \psi^-_{d-1} - i \frac{\varphi_+}{\sin \theta_d} D_{d-1} \psi^-_{d-1}
\end{pmatrix} .
\]
Since $D_{d-1} \psi^-_{d-1} = -i \frac{d-1}{2} \psi^-_{d-1}$ we deduce
\[
D_d[\varphi_+,\varphi_-] = \begin{pmatrix} 
i \Big((\partial_{\theta_d}+\frac{d-1}{2}\cot\theta_d) \varphi_- + \frac{d-1}{2} \frac{ \varphi_-}{\sin \theta_d} \Big) \psi^-_{d-1} \\
i^2\Big( -(\partial_{\theta_d}+\frac{d-1}{2}\cot\theta_d) \varphi_+ + \frac{d-1}{2} \frac{\varphi_+}{\sin \theta_d} \Big) \psi^-_{d-1}
\end{pmatrix} 
\]
which corresponds to the desired action of $D_d$. Since $[\varphi_+,\varphi_-] = (\varphi_+,i\varphi_-) \otimes \psi_{d-1}^-$, we deduce 
\[
\an{\varphi,\varphi}_{\C^{2^{\lfloor \frac{d}{2} \rfloor}}} = \an{(\varphi_+,i\varphi_-),(\varphi_+,i\varphi_-)}_{\C^2} \an{\psi_{d-1}^-,\psi_{d-1}^-}_{\C^{2^{\lfloor \frac{d-1}{2} \rfloor}}}  = c_{d-1} (|\varphi_+|^2 + |\varphi_-|^2).
\]
A simple computation gives that $$\Gamma_d^d [\varphi_+,\varphi_-]=\begin{pmatrix}
        1 & 0 \\ 0 & -1
    \end{pmatrix}\begin{pmatrix} \varphi_+(\theta_d) \psi_{d-1}^- \\
i \varphi_-(\theta_d) \psi_{d-1}^-
\end{pmatrix} =\begin{pmatrix} i\varphi_-(\theta_d) \psi_{d-1}^- \\
 \varphi_+(\theta_d) \psi_{d-1}^-
\end{pmatrix}  = [i\varphi_-,-i\varphi_+ ].$$ From this result we deduce that $\an{\varphi,\Gamma_d^d\varphi}_{\C^{2^{\lfloor \frac{d}{2} \rfloor}}}$ is proportional to $2 \mathrm{Im} (\overline{\varphi_-} \varphi_+)$, so it only depends on $\theta_d$.

To get the lemma, we apply the proposition with $\varphi_+ = \cos(\cdot/2)$ and $\varphi_- = \pm \sin(\cdot /2)$. A straightforward computation yields
\begin{align*}
\Big( \partial_\theta + \frac{d-1}{2} (\cot \theta + \sin^{-1} \theta) \Big) \sin \frac\theta2 =& \frac{d}{2}\cos\frac\theta2\\
\Big( -\partial_\theta + \frac{d-1}{2} (-\cot \theta + \sin^{-1} \theta) \Big) \cos \frac\theta2 =& \frac{d}{2}\sin\frac\theta2   
\end{align*}
which yields the result. 

We have 
\[
\langle \psi_d^\pm,\psi_d^\pm\rangle_{\C^{2^{\lfloor \frac{d}{2}\rfloor}}} = c_{d-1} (\cos^2(\frac\theta2) + \sin^2(\frac\theta2) ) = c_{d-1}.
\]

Finally $\Gamma_{d+1}^{d+1} [\varphi_+,\varphi_-] = [\varphi_+,-\varphi_-]$, we get $\Gamma_{d+1}^{d+1}\psi_d^\pm = \psi_d^\mp$.

Case 2 : $d$ is odd. We have that
\begin{multline*}
    D_d[\varphi_+,\varphi_-] = \Big( (\partial_{\theta_d} + \frac{d-1}{2} \cot \theta_d) \Gamma_d^d  + \frac1{\sin \theta_d} D_{d-1} \Big) \varphi_+ (1+i\Gamma_d^d)\psi_{d-1}^- \\
    +i \Big( (\partial_{\theta_d} + \frac{d-1}{2} \cot \theta_d) \Gamma_d^d  + \frac1{\sin \theta_d} D_{d-1} \Big) \varphi_- (1+i\Gamma_d^d)\psi_{d-1}^+.
\end{multline*}
We remark that $\Gamma_d^d$ and $1+\Gamma_d^d$ commute, which yields $\Gamma_d^d(1+i\Gamma_d^d) \psi^\pm_{d-1} = (1+i\Gamma_d^d)\psi^\mp_{d-1}$. What is more, 
\[
D_{d-1}(1+i\Gamma_d^d) \psi^\pm_{d-1} = D_d(\psi^\pm_{d-1} +i \psi^\mp_{d-1})= i\frac{d-1}{2} (\pm \psi^\pm_{d-1} \mp i \psi^\mp_{d-1}) = \pm \frac{d-1}{2} (i\Gamma_d^d +1) \psi^\mp_{d-1}.
\]
This yields
\begin{multline*}
    D_d[\varphi_+,\varphi_-] = \Big( (\partial_{\theta_d} + \frac{d-1}{2} \cot \theta_d)  - \frac{d-1}{2\sin \theta_d}  \Big) \varphi_+ (1+i\Gamma_d^d)\psi_{d-1}^+ \\
    +i \Big( (\partial_{\theta_d} + \frac{d-1}{2} \cot \theta_d)  + \frac{d-1}{2\sin \theta_d} \Big) \varphi_- (1+i\Gamma_d^d)\psi_{d-1}^-
\end{multline*}
which we can rearrange as
\begin{multline*}
    D_d[\varphi_+,\varphi_-] = i \Big( (\partial_{\theta_d} + \frac{d-1}{2} \cot \theta_d)  + \frac{d-1}{2\sin \theta_d} \Big) \varphi_- (1+i\Gamma_d^d)\psi_{d-1}^- \\
    +i^2\Big( -(\partial_{\theta_d} + \frac{d-1}{2} \cot \theta_d)  + \frac{d-1}{2\sin \theta_d}  \Big) \varphi_+ (1+i\Gamma_d^d)\psi_{d-1}^+
\end{multline*}
which we recognise to be 
\begin{multline*}
    D_d[\varphi_+,\varphi_-] = \Big[ i \Big( (\partial_{\theta_d} + \frac{d-1}{2} \cot \theta_d)  + \frac{d-1}{2\sin \theta_d} D_{d-1} \Big) \varphi_- ,\\
    i\Big( -(\partial_{\theta_d} + \frac{d-1}{2} \cot \theta_d)  + \frac{d-1}{2\sin \theta_d}  \Big) \varphi_+ \Big].
\end{multline*}

Using that 
\begin{align*}
    [\varphi_+,\varphi_-] = &\varphi_+ \otimes (1+i\Gamma_d^d) \psi_{d-1}^- + i\varphi_- \otimes (1+i\Gamma_d^d) \psi_{d-1}^+\\
    \an{(1+i\Gamma_d^d)\psi_{d-1}^+,(1+i\Gamma_d^d)\psi_{d-1}^+}_{\C^{2^{\lfloor \frac{d}{2} \rfloor}}}= &\an{(1+i\Gamma_d^d)\psi_{d-1}^-,(1+i\Gamma_d^d)\psi_{d-1}^-}_{\C^{2^{\lfloor \frac{d}{2} \rfloor}}}=2c_{d-1}\\ \an{(1+i\Gamma_d^d)\psi_{d-1}^+,(1+i\Gamma_d^d)\psi_{d-1}^-}_{\C^{2^{\lfloor \frac{d}{2} \rfloor}}} = & 2c_{d-2}\cos \theta_{d-1}
\end{align*}
we get
\[
\an{\varphi,\varphi}_{\C^{2^{\lfloor \frac{d}{2} \rfloor}}} = 2c_{d-1}(|\varphi_+|^2 + |\varphi_-|^2)
+ 4c_{d-2}\cos \theta_{d-1} \mathrm{Im} (\overline{\varphi_-} \varphi_+). 
\]
Applying these formulae to $\varphi_+ = \cos\frac{\cdot}{2}$ and $\varphi_- = \sin (\frac{\cdot}{2})$, we get that
\[
D_d \psi_d^\pm = \pm i\frac{d}{2} \psi_d^\pm,\quad \an{\psi_d^\pm,\psi_d^\pm} = 2 c_{d-1}.
\]

\end{proof}

\begin{definition}
Let $\an{}_{Z_d}$ be the scalar product defined on $Z_d$ by
\[
\an{f,g}_{Z_d} = c_{d-1}\int_{0}^\pi (\bar f_+(\theta) g_+(\theta) + \bar f_-(\theta) g_-(\theta)) \sin^{d-1}\theta d\theta
\]
and set $L^2_\zon$ the completion of $Z_d$ for the topology induced by this scalar product.
\end{definition}

\begin{remark} This is consistent with the fact that depending on the parity of $d$, we have for $f=[f_+,f_-]$ and $g=[g_+,g_-]$, either
\[
\an{f,g}_{\C^{2^{\lfloor \frac{d}{2}\rfloor}}} = c_{d-1}(\bar f_+ (\theta_d) g_+(\theta_d) +  \bar f_-( \theta_d) g_-(\theta_d))
\]
or
\[
\an{f,g}_{\C^{2^{\lfloor \frac{d}{2}\rfloor}}} =2 c_{d-1}(\bar f_+ (\theta_d) g_+(\theta_d) +  \bar f_-( \theta_d) g_-(\theta_d)) + 2 i c_{d-2} \cos(\theta_{d-1}) (\bar f_+ (\theta_d) g_-(\theta_d)-\bar f_-(\theta_d)g_+(\theta_d)) 
\]
and 
\[
\int_0^\pi \cos(\theta_{d-1})\sin^{d-2}(\theta_{d-1}) d\theta_{d-1} = 0.
\]
\end{remark}

\begin{proposition} For $n\in \N$, set 
\[
e_{d,n}^\pm : \theta \mapsto  c_{d,n} [ P_n^{(\frac{d}{2}-1, \frac{d}{2})}(\cos \theta) \cos(\frac\theta2), \mp P_n^{(\frac{d}{2}-1, \frac{d}{2})}(\cos \theta) \sin(\frac\theta2)] 
\]
where $P_n^{(\alpha, \beta)}$ are Jacobi polynomials and $c_{d,n}$ is a constant chosen such that the $e_{d,n}^\pm$ are normalised in $L^2_\zon$.

We have that $e_{d,n}^\pm \in Z_d\cap F_d$, that $\an{e_{d,n_1}^{\iota_1}, e_{d,n_2}^{\iota_2}}_{Z_d} = \delta_{n_1}^{n_2} \delta_{\iota_1}^{\iota_2}$, that 
\[
D_d e_{d,n}^\pm = \mp i \Big( n + \frac{d}{2} \Big) e_{d,n}^\pm
\]
and finally that $(e_{d,n}^\pm)_n$ spans $L^2_\zon$.
\end{proposition}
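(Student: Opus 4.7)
My plan is to establish the four claims in sequence, in each case reducing the statement on $\S^d$ to a classical identity for Jacobi polynomials. Zonality of $e_{d,n}^\pm$ is immediate from its definition. Membership in $F_d$ reduces to checking that the factors $\cos(\theta_d/2)$ and $\sin(\theta_d/2)$ provide the right vanishing at the poles $\theta_d = 0, \pi$ for the full spinor (after pairing with $\psi_{d-1}^-$) to extend regularly across the poles; this parallels the verification already implicit in Lemma \ref{lem:actionDiracZonal} at $n=0$, where $\psi_d^\pm$ agrees up to normalization with $e_{d,0}^\mp$.

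For the eigenvalue equation, I apply the action formula of Proposition \ref{prop:actionDiraczonal}. The two trigonometric combinations appearing there simplify through $\frac{d-1}{2}\bigl(\cot\theta + \csc\theta\bigr) = \frac{d-1}{2}\cot(\theta/2)$ and $\frac{d-1}{2}\bigl(-\cot\theta + \csc\theta\bigr) = \frac{d-1}{2}\tan(\theta/2)$. After substituting $u = \cos\theta$, the verification of $D_d e_{d,n}^\pm = \mp i(n + d/2)\, e_{d,n}^\pm$ reduces to a first-order linear relation for $P_n^{(d/2-1,d/2)}(u)$ of the form $(1\mp u) P_n'(u) = $ a combination of $P_n$ (and possibly $P_n$ with shifted parameters). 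This is a standard shift/derivative identity for Jacobi polynomials, of the same type that underlies the endpoint calculation at the end of the proof of Lemma \ref{lem:actionDiracZonal}.

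For orthonormality I use that $iD_d$ is symmetric on $L^2_\zon$, so eigenvectors attached to distinct eigenvalues $\mp i(n + d/2)$ are $L^2_\zon$-orthogonal; this disposes of every case except $(n_1,\iota_1) = (n_2,\iota_2)$. The diagonal norm reduces, via $\cos^2(\theta/2) + \sin^2(\theta/2) = 1$ and $u = \cos\theta$, to the integral $\int_{-1}^1 P_n^{(d/2-1,d/2)}(u)^2\,(1-u^2)^{(d-2)/2}\,du$; the constant $c_{d,n}$ is then fixed so that this integral equals $c_{d-1}^{-1}$, its value being given by the classical Jacobi norm formula.

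Completeness follows from the density of $\{P_n^{(d/2-1,d/2)}\}_n$ in the weighted $L^2([-1,1],w(u)\,du)$, transported to $L^2_\zon$ via $u = \cos\theta$ and the decomposition into the $\cos(\theta/2)$ and $\sin(\theta/2)$ components. Alternatively, one can invoke the spectral theorem: $D_d$ has compact resolvent on $L^2(\S^d)$, the zonal subspace is closed and $D_d$-invariant, and the $e_{d,n}^\pm$ exhaust the corresponding eigenspaces. The main obstacle I anticipate is pinning down the precise Jacobi shift identity needed for the eigenvalue computation; once this is in hand, the remaining steps are routine consequences of Jacobi orthogonality and of the self-adjointness structure of $D_d$.
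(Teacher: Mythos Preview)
Your plan is sound, and your treatment of orthogonality (via symmetry of $iD_d$) and completeness (via weighted Jacobi density or compact resolvent) actually goes beyond the paper, which only sketches the eigenvalue part and defers the rest to \cite{camporesi1996eigenfunctions}. The paper's route for the eigenvalue claim is the reverse of yours: rather than verifying the displayed formula, it writes an unknown eigenfunction as $\varphi_+ = \cos(\theta/2)\,f_+(\cos\theta)$, $\varphi_- = \sin(\theta/2)\,f_-(\cos\theta)$, derives the first-order system
\[
(z-1)f_-' + \tfrac{d}{2}f_- = \lambda f_+,\qquad (z+1)f_+' + \tfrac{d}{2}f_+ = \lambda f_-,
\]
eliminates to the second-order Jacobi ODE for $f_+$, and then recovers $f_-$ from the system. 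This derivation buys something your direct verification does not immediately give: it shows that $f_- = \mathrm{sgn}(\lambda)\,P_n^{(d/2,\,d/2-1)}$, i.e.\ the second component carries the \emph{swapped} Jacobi parameters, not $P_n^{(d/2-1,\,d/2)}$ as the statement displays in both slots. If you carry out your verification with the same polynomial in both components, the two first-order relations you obtain are inconsistent for $n\ge 1$ (subtracting them forces $P_n'\equiv 0$); your hedge ``possibly $P_n$ with shifted parameters'' is exactly where this correction enters. Your norm computation must then be adjusted as well: with the swapped parameters in the second slot, the factors $\cos^2(\theta/2)$ and $\sin^2(\theta/2)$ combine with $\sin^{d-1}\theta\,d\theta$ under $u=\cos\theta$ to produce the two genuine Jacobi weights $(1-u)^{d/2-1}(1+u)^{d/2}$ and $(1-u)^{d/2}(1+u)^{d/2-1}$, rather than the symmetric $(1-u^2)^{(d-2)/2}$ you wrote.
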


\begin{remark} We have that $e_{d,0}^\pm = \psi_d^\pm$.
\end{remark}

\begin{proof} We do not give a complete proof, which may be found in \cite{camporesi1996eigenfunctions} but we give an idea why the Jacobi polynomials are involved.

We wish to diagonalise $D_d$. For that, let $\varphi \in F_d\cap Z_d$ be such that $D_d\varphi=i\lambda \varphi$. This is equivalent to 
\[
\left \lbrace{ \begin{array}{c}
i(\partial_\theta +\frac{d-1}{2}\cot\theta+\frac{d-1}{2}\frac{1}{\sin\theta})\varphi_-=i\lambda\varphi_+\\
i(\partial_\theta +\frac{d-1}{2}\cot\theta-\frac{d-1}{2}\frac{1}{\sin\theta})\varphi_+=-i\lambda\varphi_-
\end{array} } \right. .
\]

We set $\varphi_-=\sin\frac{\theta}{2}f_-(\cos\theta)$ and $\varphi_+=\cos\frac{\theta}{2}f_+(\cos\theta)$ 

We obtain 
\[
\left \lbrace{ 
\begin{array}{c}
     -2\sin^2\frac{\theta}{2}f_-'(\cos \theta)+\frac{d}{2}f_-(\cos \theta)=\lambda f_+ (\cos \theta) \\
     -2\cos^2\frac{\theta}{2}f_+'(\cos \theta)-\frac{d}{2}f_+(\cos \theta)=-\lambda f_-(\cos \theta)
\end{array} }
\right. .
\]

Let $z=\cos\theta$, we deduce the system
\[
\left \lbrace{ \begin{array}{c}
     (z-1)f_-'+\frac{d}{2}f_-=\lambda f_+  \\
     (z+1)f_+'+\frac{d}{2}f_+=\lambda f_-
\end{array}}
\right. .
\]

By combining the two equations of order 1, we end up with one equation of order 2 :
\[
(1-z^2)f_+''-(-1+(d+1)z)f_+'+(\lambda^2-\frac{d^2}{4})f_+= 0
\]
or alternatively
\[
(1-z^2)f_-''-(1+(d+1)z)f_-'+(\lambda^2-\frac{d^2}{4})f_-= 0
\]
One can solve this equation using Jacobi's polynomials $f_+=P_n^{(\alpha,\beta)}$, with $\beta=\frac{d}{2},\ \alpha=\frac{d}{2}-1$ and $n$ such that 
\[
n(n+d)=\lambda^2-\frac{d^2}{4},
\]
that is $\lambda=\pm (n+\frac{d}{2})$. We deduce $f_-$ from the equation 
\[
\lambda f_- = (z+1)f_+'+ \frac{d}{2} f_+
\]

We get $f_+=P_n^{(\frac{d}{2}-1,\frac{d}{2})}$ and $f_-=P_n^{(\frac{d}{2},\frac{d}{2}-1)}sgn(\lambda)$.

\end{proof}

\section{Wick renormalization and Gibbs measure}\label{sec:Wick}

\begin{notation}Let $d$ be an even number. With the notations of Section \ref{sec:defzonDirac}, we set for $n\in \N$, $e_n = e_{d,n}^+$ (we drop the dependence on the dimension and on the positive spectrum). Let $E_N$ be the subspace of $L^2_\zon$ generated by $(e_n)_{n\leq N}$; let $P_N$ be the orthogonal projection on $E_N$. Let $\lambda_n = \sqrt{\frac{d}{2} + n}$.

Let $(g_n)_{n\in \N}$ be a sequence of iid complex Gaussian variables, centred and normalised. We set 
\[
\varphi_N=\sum_{n\leq N}g_n e_n \frac{1}{\lambda_n}.
\]
Let $\sigma_N(x,y)=\E[\lvert \varphi_N(x)\rangle\langle\varphi_N(y)\rvert]$ (the ketbra notation is with regard to $\C^{2^{\lfloor \frac{d}{2}\rfloor}}$), and $\sigma_N(x)=\E[\lvert\varphi_N\rvert^2(x)]$. The sequence $\varphi_N$ converges in $L^2(\Omega, H^s(\S^d))$ for any $s<0$. We write its limit $\varphi$ and the law of $\varphi$ on the inductive limit $\cup_{s<0} H^s$ is denoted by $\mu$.

From now on, if $u \in \C^{2^{\lfloor \frac{d}{2} \rfloor}}$, we use $|u|^2$ for $\an{u,u}_{\C^{2^{\lfloor \frac{d}{2} \rfloor}}}$ and $\bar u$ for the linear form $\langle u|_{\C^{2^{\lfloor \frac{d}{2} \rfloor}}}$.
\end{notation}

We approach equation \eqref{DiracFin} by 
\begin{equation}\label{DiracN}
i\partial_t u = i D_d u + F_N(u) 
\end{equation}
where
\begin{equation}\label{def:NthorderNL}
    F_N(u)=P_N(:W\lvert P_N u \rvert^2 P_N u :),
\end{equation}
with
\[
:W(\lvert u \rvert^2) u : (x) = \int dy w(x,y)\Big[ |u|^2(y) u(x) - \sigma_N(y) u(x) - \sigma_N(x,y) u(y)\Big] 
\]

Equation \eqref{DiracN} can be decomposed into a linear equation $i\partial_t u = i D_d u $ on $E_N^\bot$ and a Hamiltonian ODE on $E_N$. The Hamiltonial on $E_N$ is given by
\[
H_N(u) = \frac12 \an{u,iD_d u}_{\S^d} + \frac14 \mathcal E_N(u)
\]
with
\begin{equation}\label{def:energie}\begin{array}{rcl}
     \mathcal{E}_N(\psi)& = & \int_{\S^d} dxdy w(x,y) \Big[ \lvert \psi (x)\rvert^2\lvert \psi(y)\rvert^2-2\lvert \psi (x)\rvert^2\sigma_N(y) \\
     & & - 2\overline{\psi (x)}\sigma_N(x,y)\psi+\sigma_N(x)\sigma_N(y)+Tr(\sigma_N(x,y)\sigma_N(y,x))\Big].
\end{array}
\end{equation}

The rest of this section is devoted to proving that $(\mathcal E_N \circ P_N)_N$, $(e^{-\mathcal E_N\circ P_N})_N$ and $(F_N)_N$ are Cauchy sequences in $L^r_\mu$ for $r\in [1,\infty)$.

\subsection{Renormalisation of the energy}

%In this section we prove the following proposition

\begin{proposition}\label{prop;G_N Cauchy}
The sequence $(\mathcal{E}_N\circ P_N)_{N\in\N}$ is a Cauchy sequence in $L^2(\mu)$.

Moreover, for all $0<\nu <1 - \frac{6d}q$, we have that for any $M\leq N $
    
    \begin{equation}\label{eq:cauchybound}
       \|\mathcal{E}_N\circ P_N - \mathcal{E}_M\circ P_M \|_{L^2_\mu}\lesssim M^{-\nu/2}.
    \end{equation}
\end{proposition}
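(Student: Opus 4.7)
The plan is to exploit that $\mathcal{E}_N\circ P_N$ is by construction a fully Wick-ordered quartic in the Gaussian field $\varphi_N$: the four counterterms in \eqref{def:energie} are precisely the Wick contractions of $|\varphi_N(x)|^2|\varphi_N(y)|^2$ (two ``same-point'' ones producing $\sigma_N(x)$ and $\sigma_N(y)$, and two ``cross'' ones producing $\sigma_N(x,y)$ and its transpose), so that
\[
\mathcal{E}_N(P_N\psi)=\int_{\S^d\times\S^d}w(x,y)\,:\!|\varphi_N(x)|^2|\varphi_N(y)|^2\!:\,dx\,dy.
\]
Hence $\mathcal{E}_N\circ P_N$ lies in the fourth (complex) homogeneous Wiener chaos of $\mu$, and so does the difference $\mathcal{E}_N\circ P_N-\mathcal{E}_M\circ P_M$. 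On a fixed chaos, the $L^2_\mu$-norm is computable directly via the complex Wick formula.

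First I would write the difference as a single Wick-ordered integral by splitting $\varphi_N=\varphi_M+(\varphi_N-\varphi_M)$ inside the quartic; every non-vanishing term contains at least one factor $\varphi_N-\varphi_M$ or $\overline{\varphi_N-\varphi_M}$, which localizes to the ``high'' modes $M<n\le N$. Applying Wick's theorem, one obtains
\[
\|\mathcal{E}_N\circ P_N-\mathcal{E}_M\circ P_M\|_{L^2_\mu}^2=\sum_{\pi}\int w(x,y)w(x',y')\,\mathcal{P}_\pi(x,y,x',y')\,dx\,dy\,dx'\,dy',
\]
where each $\mathcal{P}_\pi$ is a product of four covariance kernels drawn from $\{\sigma_M,\,\sigma_N-\sigma_M\}$, at least one of which is a difference kernel, and $\pi$ ranges over the crossing pairings of the two copies of $|\varphi|^2|\varphi|^2$.

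Second, I would bound each of these eight-fold integrals using the assumed boundedness $W:L^{p/2}\to L^q$. Schematically, $\int w(x,y)f(x)g(y)\,dx\,dy\lesssim\|f\|_{L^{q'}}\|g\|_{L^{p/2}}$, and one of the two copies of $w$ is used this way in each of the four variable pairs, reducing the estimate to mixed $L^{q'}$ and $L^{p/2}$ norms of products of the kernels $\sigma_N-\sigma_M$ and $\sigma_M$. Using the eigenfunction expansion $\sigma_N(x,y)=\sum_{n\le N}\lambda_n^{-2}\,e_n(x)\otimes\overline{e_n(y)}$ and the Sogge-type bound for the zonal Dirac basis (saturated on the sphere),
\[
\|e_n\|_{L^q(\S^d)}\lesssim n^{\frac{d-1}{2}-\frac{d}{q}}\qquad\text{for }q\text{ above the Sogge exponent,}
\]
together with $\lambda_n^{-2}\asymp n^{-1}$, these mixed norms reduce to convergent series in the indices of the difference kernels, whose tails starting from $M$ produce the announced factor $M^{-\nu/2}$.

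The main obstacle will be the careful bookkeeping of Hölder exponents and pairings, arranging matters so that (i) each of the two copies of $w$ can be paired with variables on which the $L^{p/2}\to L^q$ mapping of $W$ applies, and (ii) the residual summation in the two difference-kernels produces a net exponent $n^{-1-\nu+O(d/q)}$. Each of the two difference factors contributes roughly $(d-1)-2d/q\le 2d/q$ net loss after Sogge, summing to about $6d/q$ once one accounts for the two copies of $|\varphi|^2$ involved; this is why the threshold $q>12d$ appears, and why the admissible range is $\nu<1-6d/q$. The $L^2_\mu$-bound itself requires no hypercontractivity, but the very same representation combined with Wiener-chaos hypercontractivity will subsequently upgrade this Cauchy property to $L^r_\mu$ for every finite $r$, as used in the rest of the section.
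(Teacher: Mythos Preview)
Your high-level plan is right: $\mathcal E_N\circ P_N$ sits in the fourth complex Wiener chaos, so its $L^2_\mu$-norm (and that of the increment) is computable by Wick/Isserlis, and the inputs one ultimately needs are the $L^{p/2}\!\to\! L^q$ boundedness of $W$ and pointwise $L^p$ bounds on the eigenspinors. That is exactly the skeleton of the paper's argument.

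Where your plan diverges from the paper, and where there is a real gap, is in the implementation. You propose to stay in physical space and bound ``mixed $L^{q'}$ and $L^{p/2}$ norms of products of the kernels $\sigma_N-\sigma_M$ and $\sigma_M$'' after expanding $\sigma_N(x,y)=\sum_{n\le N}\lambda_n^{-2}e_n(x)\otimes\overline{e_n(y)}$. But a direct triangle-inequality estimate of the kind $\|\sigma_M\|_{L^p_xL^p_y}\le\sum_{n\le M}\lambda_n^{-2}\|e_n\|_{L^p}^2$ does not converge: with $\|e_n\|_{L^p}\lesssim n^{d/q}$ and $\lambda_n^{-2}\sim n^{-1}$ the summand is $n^{-1+2d/q}$, which diverges, and the same happens for $\sigma_N-\sigma_M$. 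So you cannot close the estimate by kernel norms alone; three ``low'' kernels will blow up and a single difference kernel cannot compensate. The paper avoids this by working in mode space with the coefficients
\[
A_{j,k,l,m}=\int w(x,y)\,\langle e_j,e_k\rangle(x)\,\langle e_l,e_m\rangle(y)\,dx\,dy
\]
and using the exact Bessel identity $\sum_m|A_{j,k,l,m}|^2=\|W(\langle e_k,e_j\rangle)e_l\|_{L^2}^2$. This step absorbs the largest index in $L^2$ \emph{with no $L^p$ loss}, and only then does one spend the $L^p$ bounds on the three remaining indices. Combined with the weight transfer $\lambda_{\max}^{-2}\le M^{-\nu}(\lambda_j\lambda_k\lambda_l)^{-2(1-\nu)/3}$, the residual three-index sum has summand $\lambda^{-2-2(1-\nu)/3+4d/q}$, convergent precisely for $\nu<1-6d/q$. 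Your sketch does not contain an analogue of this Bessel step, and without it the bookkeeping you defer to ``the main obstacle'' cannot be completed.

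Two smaller points. First, the eigenfunction bound here is not the $d$-dimensional Sogge estimate $\|e_n\|_{L^q}\lesssim n^{(d-1)/2-d/q}$: the zonal eigenspinors are one-variable objects on $(0,\pi)$ with weight $\sin^{d-1}\theta\,d\theta$, built from Jacobi polynomials, and the relevant bound (from Szeg\H{o}) is $\|e_n\|_{L^p}\lesssim n^{d/2-d/p}=n^{d/q}$. Second, your heuristic ``each of the two difference factors contributes roughly $(d-1)-2d/q\le 2d/q$'' is not consistent with $q>12d$ (it would force $q\le 4d/(d-1)$) and does not reproduce the threshold $1-6d/q$; in the paper's counting there is effectively \emph{one} high index, and the $6d/q$ arises from three $L^p$-losses of size $2d/q$ on the remaining indices after the Bessel step.
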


We first set
\begin{equation}\begin{array}{rcl}
G_N & = & \int dxdy  w(x,y) \Big[\lvert \varphi_N (x)\rvert^2 \lvert \varphi_N(y)\rvert^2-2\sigma_N(y)\lvert \varphi_N (x) \rvert^2-2\overline{\varphi_N (x)}\sigma_N(x,y)\varphi_N  (y) \\
     & &  +\sigma_N(x)\sigma_N(y)+Tr(\sigma_N(x,y)\sigma_N(y,x))\Big],
\end{array}
\end{equation}
We remark that for any $M,N \in \N$ and any non negative measurable $f : \C^2 \mapsto \R_+$, we have 
\[
\|f\circ(\mathcal{E}_N\circ P_N, \mathcal E_M \circ P_M)\|_{L^2(\mu)}^2=\E[\lvert f(G_N,G_M)\rvert^2].
\]

%We begin by proving the following proposition 

The proof of Proposition \ref{prop;G_N Cauchy} is based on the following identity.
\begin{lemma}
    We have the following equality 
\begin{equation}\label{eq:Gn}
        G_N=  \sum_{j,k,l,m\leq N} \int dxdy w(x,y)\frac{\langle e_j,e_k\rangle (x)\langle e_l,e_m\rangle (y)}{\lambda_j \lambda_k\lambda_l\lambda_m}:\overline{g_j}g_k\overline{g_l}g_m:,
\end{equation}
where $:\overline{g_j}g_k\overline{g_l}g_m: =\Bar{g_j}g_k\overline{g_l}g_m-\delta_l^m\overline{g_j}g_k-\delta_j^k\overline{g_l}g_m-\delta_j^mg_k\overline{g_l}-\delta_k^l \overline{g_j}g_m+\delta_j^k \delta_l^m+\delta_l^k\delta_j^m$.
\end{lemma}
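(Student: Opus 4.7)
The identity is essentially a book-keeping computation: expand each of the five terms in $G_N$ in the eigenbasis $(e_n)$, collect everything into a single quadruple sum indexed by $(j,k,l,m)$ with the prescribed weight, and check that the resulting coefficient agrees with the Wick-ordered monomial after using the symmetry $w(x,y)=w(y,x)$.

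\textbf{Step 1: expand the deterministic ingredients.} From $\varphi_N=\sum_{n\leq N}g_n e_n/\lambda_n$ one gets
\[
|\varphi_N(x)|^2=\sum_{j,k\leq N}\frac{\overline{g_j}g_k}{\lambda_j\lambda_k}\langle e_j,e_k\rangle(x),\qquad |\varphi_N(y)|^2=\sum_{l,m\leq N}\frac{\overline{g_l}g_m}{\lambda_l\lambda_m}\langle e_l,e_m\rangle(y).
\]
Using $\E[g_j\overline{g_k}]=\delta_j^k$, the covariance kernel and its diagonal read
\[
\sigma_N(x,y)=\sum_{n\leq N}\frac{|e_n(x)\rangle\langle e_n(y)|}{\lambda_n^2},\qquad \sigma_N(x)=\sum_{n\leq N}\frac{|e_n(x)|^2}{\lambda_n^2}.
\]
Writing $\delta_l^m$ and $\delta_j^k$ to insert dummy summation indices yields
\[
\sigma_N(y)\,|\varphi_N(x)|^2=\sum_{j,k,l,m}\delta_l^m\,\frac{\overline{g_j}g_k\,\langle e_j,e_k\rangle(x)\,\langle e_l,e_m\rangle(y)}{\lambda_j\lambda_k\lambda_l\lambda_m},
\]
and likewise $\sigma_N(x)\sigma_N(y)$ carries the factor $\delta_j^k\delta_l^m$.

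\textbf{Step 2: expand the mixed terms.} A direct computation gives
\[
\overline{\varphi_N(x)}\,\sigma_N(x,y)\,\varphi_N(y)=\sum_{j,k,l,m}\delta_k^l\,\frac{\overline{g_j}g_m\,\langle e_j,e_k\rangle(x)\,\langle e_l,e_m\rangle(y)}{\lambda_j\lambda_k\lambda_l\lambda_m},
\]
while $\Tr(\sigma_N(x,y)\sigma_N(y,x))=\sum_{n,p}\frac{\langle e_p,e_n\rangle(x)\langle e_n,e_p\rangle(y)}{\lambda_n^2\lambda_p^2}$, which after renaming $(p,n,n,p)\mapsto(j,k,l,m)$ reads $\sum_{j,k,l,m}\delta_k^l\delta_j^m/(\lambda_j\lambda_k\lambda_l\lambda_m)\,\langle e_j,e_k\rangle(x)\langle e_l,e_m\rangle(y)$.

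\textbf{Step 3: collect and symmetrize.} Plugging into the definition of $G_N$ yields $G_N=\sum_{j,k,l,m\leq N}\int dx\,dy\,w(x,y)\,\frac{\langle e_j,e_k\rangle(x)\langle e_l,e_m\rangle(y)}{\lambda_j\lambda_k\lambda_l\lambda_m}\,A_{jklm}$ with
\[
A_{jklm}=\overline{g_j}g_k\overline{g_l}g_m-2\delta_l^m\overline{g_j}g_k-2\delta_k^l\overline{g_j}g_m+\delta_j^k\delta_l^m+\delta_j^m\delta_k^l.
\]
Since $w$ is symmetric, the integral is invariant under the exchange $(j,k)\leftrightarrow(l,m)$, so one may replace $A_{jklm}$ by $\tfrac12(A_{jklm}+A_{lmjk})$; using $g_k\overline{g_l}=\overline{g_l}g_k$ a direct computation shows this equals $:\overline{g_j}g_k\overline{g_l}g_m:$ as defined in the statement.

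\textbf{Expected difficulty.} There is no conceptual obstacle: the proof is a reorganization of sums. The only point requiring care is keeping track of which Kronecker pairings arise from $\E[g_ig_j]=0$ versus $\E[\overline{g_i}g_j]=\delta_i^j$ (so that certain pairings, like $\delta_j^l$ or $\delta_k^m$, never appear), and ensuring that the symmetrization in $w$ is needed precisely to replace the asymmetric coefficient $A_{jklm}$ by the symmetric Wick expression.
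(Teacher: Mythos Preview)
Your proof is correct and follows essentially the same approach as the paper: both expand every term of $G_N$ in the eigenbasis $(e_n)$, introduce Kronecker symbols to write each piece against the common weight $\langle e_j,e_k\rangle(x)\langle e_l,e_m\rangle(y)/(\lambda_j\lambda_k\lambda_l\lambda_m)$, and exploit the symmetry $w(x,y)=w(y,x)$ to recover the Wick monomial. The only cosmetic difference is that the paper applies the $x\leftrightarrow y$ symmetrization to the $B$ and $C$ terms individually before summing, whereas you collect the asymmetric coefficient $A_{jklm}$ first and symmetrize it at the end; both routes yield the same identity.
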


\begin{proof}
Set 
\[
G_N = A- B-C+D+E
\]
with
\begin{align*}
 A = & \int dxdy  w(x,y) \lvert \varphi_N (x)\rvert^2 \lvert \varphi_N(y)\rvert^2\\
 B = & 2 \int dxdy  w(x,y) \sigma_N(y)\lvert \varphi_N (x) \rvert^2 \\
 C = & 2\int dxdy  w(x,y) \overline{\varphi_N (x)}\sigma_N(x,y)\varphi_N  (y) \\
 D = & \int dxdy  w(x,y) \sigma_N(x)\sigma_N(y)\\
 E = & \int dxdy  w(x,y) Tr(\sigma_N(x,y)\sigma_N(y,x)).
\end{align*}
    
First, we have by definition of $\varphi_N$,
\[
A = \sum_{j,k,l,m \leq N } \int dxdy w(x,y) \frac{\langle e_j,e_k\rangle (x) \langle e_l,e_m\rangle (y)}{\lambda_j\lambda_k\lambda_l\lambda_m}\overline{g_j}g_k\overline{g_l}g_mdx.
\]

For the second term, because $w$ is symmetric we have,
\[
B = \int dxdy  w(x,y) [\lvert \varphi_N (x) \rvert^2\sigma_N(y) + \sigma_N(x)|\varphi_N(y)|^2.
\]
We replace $\varphi_N$ and $\sigma_N$ by their respective values and get
\[
B = \int dxdy  w(x,y) \Big[\sum_{j,k\leq N} \frac{\an{e_j,e_k}(x)}{\lambda_j \lambda_k}\overline{g_j}g_k \sum_{l\leq N} \frac{|e_l(y)|^2}{\lambda_l^2} + \sum_{j\leq N}\frac{|e_j(x)|^2}{\lambda_j^2}\sum_{l,m \leq N}\frac{\an{e_l,e_m}(y) \overline{g_l} g_m}{\lambda_l\lambda_m}\Big].
\]
Using the Kronecker symbol, we get
\[
B = \int dxdy  w(x,y) \Big[\sum_{j,k\leq N} \frac{\an{e_j,e_k}(x)}{\lambda_j \lambda_k}\overline{g_j}g_k \sum_{l,m\leq N}\delta_l^m \frac{\an{e_l,e_m}(y)}{\lambda_l \lambda_m} + \sum_{j,k\leq N}\delta_k^j\frac{\an{e_j,e_k}(x)}{\lambda_j \lambda_k}\sum_{l,m \leq N}\frac{\an{e_l,e_m}(y) \overline{g_l} g_m}{\lambda_l\lambda_m}\Big]
\]
which we rearrange as
\[
B = \sum_{j,k,l,m\leq N} \int dxdy  w(x,y)  \frac{\an{e_j,e_k}(x)\an{e_l,e_l}}{\lambda_j \lambda_k\lambda_l\lambda_m}[\overline{g_j}g_k \delta_l^m + \delta_j^k \overline{g_l}g_m] .
\]

We turn to $C$. Again, we may use the symmetry of $w$ to get
\[
 C =  \int dxdy  w(x,y) [\overline{\varphi_N (x)}\sigma_N(x,y)\varphi_N  (y) + \overline{\varphi_N (y)} \sigma_N (y,x) \varphi_N(x) ].
\]
Using the definitions of $\sigma_N$ and $\varphi_N$, we get
\begin{multline*}
 C =  \int dxdy  w(x,y) \Big[ \sum_{j\leq N} \frac{\overline{g_j}\langle e_j(x)| }{\lambda_j} \sum_{k\leq N} \frac{|e_k(x) \rangle \langle e_k(y) |}{\lambda_k^2} \sum_{m\leq N} \frac{g_m e_m(y)}{\lambda_m}
 \\
 +\sum_{l\leq N} \frac{\overline{g_l}\langle e_l(y)| }{\lambda_l} \sum_{j\leq N} \frac{|e_j(y) \rangle \langle e_j(x) |}{\lambda_j^2} \sum_{k\leq N} \frac{g_k e_k(x)}{\lambda_k}\Big].
\end{multline*}
Making use of the Kronecker symbols, we get
\begin{multline*}
 C =  \int dxdy  w(x,y) \Big[ \sum_{j\leq N} \frac{\overline{g_j}\langle e_j(x)| }{\lambda_j} \sum_{k,l\leq N}\delta_k^l \frac{|e_k(x) \rangle \langle e_l(y) |}{\lambda_k\lambda_l} \sum_{m\leq N} \frac{g_m e_m(y)}{\lambda_m}\\
 +\sum_{l\leq N} \frac{\overline{g_l}\langle e_l(y)| }{\lambda_l} \sum_{j,m\leq N}\delta_j^m \frac{|e_m(y) \rangle \langle e_j(x) |}{\lambda_j \lambda_m} \sum_{k\leq N} \frac{g_k e_k(x)}{\lambda_k}\Big]
\end{multline*}
which we recognize to be
\[
 C =  \sum_{j,k,lm\leq N}\int dxdy  w(x,y) \frac{\an{e_j,e_k}(x) \an{e_l,e_m}(y)}{\lambda_j\lambda_k\lambda_l \lambda_m} [\overline{g_j} \delta_k^l g_m + \delta_{j}^m \overline{g_l}g_k]. 
\]

We use the definition of $\sigma_N$ and Kronecker symbols to obtain
\begin{align*}
 D = & \int dxdy  w(x,y) \sigma_N(x)\sigma_N(y) \\
 = & \sum_{j,l \leq N}\int dxdy  w(x,y) \frac{|e_j(x)|^2}{\lambda_j^2} \frac{|e_l(y)|^2}{\lambda_l^2} \\ 
 = & \sum_{j,k,l,m\leq N} \int dxdy  w(x,y) \delta_j^k \delta_l^m \frac{\an{e_j,e_k}(x) \an{e_l,e_m}(y)}{\lambda_j\lambda_k  \lambda_l \lambda_m}.
\end{align*}
We proceed in the same way for $E$ and get
\begin{align*}
 E = & \int dxdy  w(x,y) \mathrm{Tr}(\sigma_N(x,y)\sigma_N(y,x)) \\
 & \sum_{j,k \leq N}\int dxdy  w(x,y) \mathrm{Tr}\Big( \frac{|e_k(x)\rangle \langle e_k(y) |}{\lambda_k^2} \frac{|e_j(y)\rangle \langle e_j(x)| }{\lambda_j^2}\Big) \\ 
 & \sum_{j,k \leq N}\int dxdy  w(x,y)  \frac{\an{e_j,e_k}(x) \an{e_k,e_j}(y) }{\lambda_j^2 \lambda_k^2} \\ 
 & \sum_{j,k,l,m \leq N}\int dxdy  w(x,y) \delta_j^m \delta_k^l \frac{\an{e_j,e_k}(x) \an{e_l,e_m}(y) }{\lambda_j \lambda_k \lambda_l \lambda_m} .
\end{align*}

\end{proof}

\begin{lemma}
    For $M\leq N$, we have that \begin{equation}\label{eq:espgn2}
        \E[\lvert G_N - G_M \rvert^2]  \leq 4 \sum_{(j,k,l,m)\in I_{M,N}}\frac{1}{\lambda_j^2\lambda_k^2\lambda_l^2\lambda_m^2} |A_{j,k,l,m}|^2,
    \end{equation}
    where \begin{equation}
        A_{j,k,l,m}=\int dxdy w(x,y) \langle e_j,e_k\rangle (x) \langle e_l,e_m\rangle(y) 
    \end{equation}
and 
\[
I_{M,N} = \{ (j,k,l,m) \;|\; M < \max(j,k,l,m) \leq N \}.
\]
\end{lemma}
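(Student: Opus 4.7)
The plan is to substitute the Wick expansion \eqref{eq:Gn} into $G_N-G_M$: since \eqref{eq:Gn} has the same summand for every $N$, the difference cancels exactly those indices with $\max(j,k,l,m)\le M$, so
\[
G_N - G_M \;=\; \sum_{(j,k,l,m)\in I_{M,N}} c_{jklm}\,:\overline{g_j}g_k\overline{g_l}g_m:\,, \qquad c_{jklm}:=\frac{A_{jklm}}{\lambda_j\lambda_k\lambda_l\lambda_m}.
\]
I then expand $|G_N-G_M|^2$ and take expectations pair by pair.

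The engine of the computation is Wick's theorem combined with the fact that normal ordering kills every internal contraction: in $\E\bigl[:\overline{g_j}g_k\overline{g_l}g_m:\,\overline{:\overline{g_{j'}}g_{k'}\overline{g_{l'}}g_{m'}:}\bigr]$ only pairings that link the two blocks survive, and only those that pair a barred symbol with an unbarred one are non-zero. The first block contributes two barred ($\overline{g_j},\overline{g_l}$) and two unbarred ($g_k,g_m$) symbols; after conjugation the second contributes two unbarred ($g_{j'},g_{l'}$) and two barred ($\overline{g_{k'}},\overline{g_{m'}}$) symbols. Matching them gives $2!\cdot 2!=4$ admissible pairings, whose Kronecker constraints force $(j',k',l',m')$ to be one of the four rearrangements $(j,k,l,m)$, $(j,m,l,k)$, $(l,k,j,m)$, $(l,m,j,k)$. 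Substituting yields
\[
\E\bigl[|G_N-G_M|^2\bigr] \;=\; \sum_{(j,k,l,m)\in I_{M,N}} c_{jklm}\bigl(\overline{c_{jklm}}+\overline{c_{jmlk}}+\overline{c_{lkjm}}+\overline{c_{lmjk}}\bigr).
\]

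To close, I apply $|c_\alpha\overline{c_\beta}|\le \tfrac12\bigl(|c_\alpha|^2+|c_\beta|^2\bigr)$ to each of the three cross-terms and use that all three of the index permutations above preserve $\max(j,k,l,m)$, hence act as bijections of $I_{M,N}$ onto itself. After relabelling, each of the four contributions is bounded by $\sum_{I_{M,N}}|c_{jklm}|^2$, and summing gives $\E\bigl[|G_N-G_M|^2\bigr]\le 4\sum_{I_{M,N}}|c_{jklm}|^2$, which is precisely \eqref{eq:espgn2}.

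The only real obstacle is the combinatorial bookkeeping: one has to check carefully, from the explicit formula for $:\overline{g_j}g_k\overline{g_l}g_m:$ given in the previous lemma, that the normal ordering indeed suppresses every internal contraction and that exactly the four cross matchings listed above remain. Once this is verified, the rest is a routine Cauchy--Schwarz bound together with the invariance of $I_{M,N}$ under coordinate permutations.
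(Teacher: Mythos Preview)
Your proof is correct and follows essentially the same route as the paper: the paper invokes Janson's Theorem~3.9 to obtain the covariance formula $(\delta_j^{j'}\delta_l^{l'}+\delta_j^{l'}\delta_l^{j'})(\delta_k^{k'}\delta_m^{m'}+\delta_k^{m'}\delta_m^{k'})$, which is exactly your four cross-pairings, and then applies the same $|ab|\le\tfrac12(|a|^2+|b|^2)$ bound together with permutation-invariance of $I_{M,N}$. The only cosmetic difference is that the paper rewrites each $\overline{A_{j'k'l'm'}}$ via the identity $\overline{A_{jklm}}=A_{kjml}$, so the displayed permutations look different from yours but encode the same terms.
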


\begin{proof}
    The equality comes from by noticing that $:\overline{g_j}g_k\overline{g_l}g_m:$ corresponds to the definition of the Wick product in \cite{janson1997gaussian}. Using Theorem 3.9 p.26 in \cite{janson1997gaussian} gives
\[
\E[:\overline{g_j}g_k\overline{g_l}g_m::g_{j'}\overline{g_{k'}}g_{l'}\overline{g_{m'}}:]=(\delta_j^{j'}\delta_l^{l'}+\delta_j^{l'}\delta_l^{j'})(\delta_k^{k'}\delta_m^{m'}+\delta_k^{m'}\delta_m^{k'}).
\]
    
Using this expression combined with \eqref{eq:Gn} and the fact that $I_{M,N}$ is stable under permutations yield
    \begin{equation*}
         \E[\lvert G_N - M \rvert^2]  = \sum_{j,k,l,m\in I_{M,N}}\frac{1}{\lambda_j^2\lambda_k^2\lambda_l^2\lambda_m^2}\Bigg(\lvert A_{j,k,l,m}\rvert^2+A_{j,k,l,m}A_{m,j,k,l}+A_{j,k,l,m}A_{k,l,m,j}+A_{j,k,l,m}A_{m,l,k,j}\Bigg).
    \end{equation*}
    
We use that for any permutation $\sigma \in \mathfrak S_4$, we have 
\[
|A_{j,k,l,m}A_{\sigma(j), \sigma(k),\sigma(l), \sigma(m)}|\leq \frac{|A_{j,k,l,m}|^2 + |A_{\sigma(j), \sigma(k),\sigma(l), \sigma(m)}|^2}{2}
\]
and the symmetry of the sum to conclude.

\end{proof}

\begin{proof}[Proof of Proposition \ref{prop;G_N Cauchy}]
Let $\nu \in (0,1-\frac{6d}{q})$. Consider the sum
\[
\sum_{(j,k,l,m)\in I_{M,N}}\frac{1}{\lambda_j^2\lambda_k^2\lambda_l^2\lambda_m^2} |A_{j,k,l,m}|^2.
\]
Let $(j,k,l,m) \in I_{M,N}$ and let $i_0 = \max (j,k,l,m)$. We have that
\[
\lambda_{i_0}^{-2} \leq M^{-\nu} \prod_{i\neq i_0} \lambda_i^{-2(1-\nu)/3}.
\]
We deduce 
\[
\frac{1}{\lambda_j^2\lambda_k^2\lambda_l^2\lambda_m^2} \leq M^{-\nu }\prod_{i\neq i_0}\lambda_i^{-2(1+(1-\nu)/3)}
\] 
and therefore
\begin{align*}
\sum_{(j,k,l,m)\in I_{M,N}}\frac{1}{\lambda_j^2\lambda_k^2\lambda_l^2\lambda_m^2} |A_{j,k,l,m}|^2 \leq  & M^{-\nu} \sum_{j,k,l} \frac1{\lambda_j^{2(1+(1-\nu)/3)} \lambda_k^{2(1+(1-\nu)/3)} \lambda_l^{2(1+(1-\nu)/3)}}\sum_m |A_{j,k,l,m}|^2\\
& +  M^{-\nu} \sum_{j,k,m} \frac1{\lambda_j^{2(1+(1-\nu)/3)} \lambda_k^{2(1+(1-\nu)/3)} \lambda_m^{2(1+(1-\nu)/3)}} \sum_l |A_{j,k,l,m}|^2 \\
& +  M^{-\nu} \sum_{j,l,m} \frac1{\lambda_j^{2(1+(1-\nu)/3)} \lambda_l^{2(1+(1-\nu)/3)} \lambda_m^{2(1+(1-\nu)/3)}} \sum_k |A_{j,k,l,m}|^2 \\
& +  M^{-\nu} \sum_{k,l,m} \frac1{\lambda_k^{2(1+(1-\nu)/3)} \lambda_l^{2(1+(1-\nu)/3)} \lambda_m^{2(1+(1-\nu)/3)}} \sum_j |A_{j,k,l,m}|^2
.
\end{align*}

We have that
\begin{align*}
A_{j,k,l,m} = &\an{W(\an{e_k,e_j})e_l,e_m}_{L^2} \\
= & \an{e_l,W(\an{e_j,e_k}) e_m}_{L^2} \\
= & \an{W(\an{e_m,e_l}) e_j,e_k}_{L^2} \\
= & \an{e_j, W(\an{e_l,e_m})e_k}_{L^2}.
\end{align*}
We deduce 
\begin{align*}
\sum_{(j,k,l,m)\in I_{M,N}}\frac{1}{\lambda_j^2\lambda_k^2\lambda_l^2\lambda_m^2} |A_{j,k,l,m}|^2 \leq  & M^{-\nu} \sum_{j,k,l} \frac1{\lambda_j^{2(1+(1-\nu)/3)} \lambda_k^{2(1+(1-\nu)/3)} \lambda_l^{2(1+(1-\nu)/3)}}\|W(\an{e_k,e_j})e_l \|_{L^2}^2\\
& +  M^{-\nu} \sum_{j,k,m} \frac1{\lambda_j^{2(1+(1-\nu)/3)} \lambda_k^{2(1+(1-\nu)/3)} \lambda_m^{2(1+(1-\nu)/3)}}\|W(\an{e_j,e_k}) e_m\|_{L^2}^2 \\
& +  M^{-\nu} \sum_{j,l,m} \frac1{\lambda_j^{2(1+(1-\nu)/3)} \lambda_l^{2(1+(1-\nu)/3)} \lambda_m^{2(1+(1-\nu)/3)}} \|W(\an{e_m,e_l}) e_j\|_{L^2}^2 \\
& +  M^{-\nu} \sum_{k,l,m} \frac1{\lambda_k^{2(1+(1-\nu)/3)} \lambda_l^{2(1+(1-\nu)/3)} \lambda_m^{2(1+(1-\nu)/3)}} \|W(\an{e_l,e_m})e_k\|_{L^2}^2
.
\end{align*}
By symmetry of the sum, we obtain
\[
\sum_{(j,k,l,m)\in I_{M,N}}\frac{1}{\lambda_j^2\lambda_k^2\lambda_l^2\lambda_m^2} |A_{j,k,l,m}|^2 \leq  4 M^{-\nu} \sum_{j,k,l} \frac1{\lambda_j^{2(1+(1-\nu)/3)} \lambda_k^{2(1+(1-\nu)/3)} \lambda_l^{2(1+(1-\nu)/3)}}\|W(\an{e_k,e_j})e_l \|_{L^2}^2.
\]
We use that 
\[
 \|W(\an{e_k,e_j})e_l \|_{L^2} \leq \|W(\an{e_k,e_j})\|_{L^q} \|e_l\|_{L^p}.
\]
Using that $W$ is continuous from $L^{p/2}$ to $L^q$, and that $\|e_l\|_{L^p} \lesssim \lambda_l^{2(\frac{d}{2} - \frac{d}{p})} = \lambda_l^{2d/q}$ (see for instance \cite{szeg1939orthogonal} p391 exercise 91 with $x = \cos\theta$, with $\mu = \frac{d-2}{2}$ and $\alpha = \frac{d}{2}$) we get
\[
 \|W(\an{e_k,e_j})e_l \|_{L^2} \leq \|\an{e_k,e_j}\|_{L^p/2} \|e_l\|_{L^p} \leq \lambda_j^{2d/q}\lambda_k^{2d/q}\lambda_l^{2d/q}.
\]
We deduce
\[
\sum_{(j,k,l,m)\in I_{M,N}}\frac{1}{\lambda_j^2\lambda_k^2\lambda_l^2\lambda_m^2} |A_{j,k,l,m}|^2 \lesssim M^{-\nu} \Big(\sum_j \lambda_j^{2(-1-(1-\nu)/3 + 2d/q)}\Big)^3.
\]
We have 
\[
- \frac{1-\nu}{3} + \frac{2d}{q} < -\frac{1-(1-6d/q)}{3} + \frac{2d}{q} <0
\]
hence the series converges.
\end{proof}

For the rest of the paper, we fix $\nu \in (\frac{6d}{q}, 1 - \frac{6d}q)$. We notice that since $q>12d$, this interval is not empty.

\begin{corollary}\label{cor:CauchyLp}
    The sequence $(\mathcal{E}_N\circ P_N)_{N \in\N}$ is a Cauchy sequence in $L^r(\mu)$, for any $r\geq2$.

    Moreover for $M\leq N$ \begin{equation}\label{eq:cauchyboundlp}
        \|\mathcal{E}_N\circ P_N-\mathcal{E}_M\circ P_M\|_{L^r(\mu)}=(\E[\lvert G_N-G_M\rvert^r])^\frac{1}{r}\lesssim (r-1)^{2}\frac{1}{M^{\frac{\nu}{2}}}.
    \end{equation}
\end{corollary}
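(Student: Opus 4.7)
The plan is to deduce the $L^r(\mu)$ estimate from the $L^2(\mu)$ estimate of Proposition \ref{prop;G_N Cauchy} by hypercontractivity, using the fact that $G_N - G_M$ lies in a fixed (finite-order) Wiener chaos. First, as the paper already noted in the line preceding Proposition \ref{prop;G_N Cauchy}, the pushforward relation
$$\|f\circ(\mathcal{E}_N\circ P_N, \mathcal{E}_M\circ P_M)\|_{L^r(\mu)}^{r} = \mathbb{E}[|f(G_N, G_M)|^{r}]$$
(which comes from the fact that $\varphi$ has law $\mu$ and $\mathcal{E}_N\circ P_N$ evaluated on $\varphi$ equals $G_N$) holds for every $r \geq 1$. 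Applied with $f(a,b)=a-b$, this gives the first equality in \eqref{eq:cauchyboundlp}, so it remains to bound $(\mathbb{E}[|G_N-G_M|^r])^{1/r}$.

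Second, inspecting the identity \eqref{eq:Gn}, the random variable $G_N - G_M$ is a finite linear combination of the Wick products $:\overline{g_j}g_k\overline{g_l}g_m:$ with indices in $I_{M,N}$. Writing each complex Gaussian $g_n$ in terms of two real independent standard Gaussians, the Wick ordering prescribed in the statement of the earlier lemma is precisely the orthogonal projection onto homogeneous polynomials of degree $4$ in these real variables; consequently $G_N - G_M$ belongs to the fourth (real) Wiener chaos associated to the family $(g_n)_n$.

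Third, Nelson's hypercontractivity inequality (Theorem 5.10 in \cite{janson1997gaussian}) states that for any random variable $X$ in the $k$-th Wiener chaos and every $r \geq 2$,
$$\|X\|_{L^r} \leq (r-1)^{k/2}\,\|X\|_{L^2}.$$
Applying this with $k=4$ to $X = G_N - G_M$ and combining with the $L^2$ bound \eqref{eq:cauchybound} of Proposition \ref{prop;G_N Cauchy} yields
$$\|G_N - G_M\|_{L^r} \leq (r-1)^{2}\,\|G_N-G_M\|_{L^2} \lesssim (r-1)^{2}\, M^{-\nu/2},$$
which is exactly \eqref{eq:cauchyboundlp}. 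The Cauchy property in $L^r(\mu)$ follows immediately. The only genuinely nontrivial step is the chaos identification at the heart of the second paragraph; everything else is a direct transcription. No further obstacle is expected.
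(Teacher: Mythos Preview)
Your proof is correct and follows essentially the same approach as the paper: the paper simply states that the corollary is an immediate consequence of Proposition \ref{prop;G_N Cauchy} combined with the hypercontractivity lemma from Chapter 5 of \cite{janson1997gaussian}, applied with $k=4$ since $G_N-G_M$ is a polynomial of degree $4$ in the Gaussians. Your identification of $G_N-G_M$ as lying in the fourth Wiener chaos is a slightly sharper (but equivalent) way of phrasing the same observation.
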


Corollary \ref{cor:CauchyLp} is an immediate consequence of Proposition \ref{prop;G_N Cauchy} and the following lemma, proven in the Chapter 5 of \cite{janson1997gaussian} (Hypercontractivity lemma).

\begin{lemma}
    Let $\overline{g}=(g_n)_n\in\N$ be a sequence of independant standard real-valued Gaussian random variables. Given $k\in \N$, if $(P_j)_{j\in\N}$ is a sequence of polynomials in $\overline{g}$ of degree at most $k$. Then, for any $r\geq2$,we have $$\Bigg\|\sum_{j\in \N}P_j(\overline{g})\Bigg\|_{L^p(\Omega)}\leq (r-1)^{k/2}\Bigg\|\sum_{j\in \N}P_j(\overline{g})\Bigg\|_{L^2(\Omega)}.$$
\end{lemma}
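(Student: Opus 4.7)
The plan is to deduce this statement from \emph{Nelson's hypercontractivity inequality} for the Ornstein--Uhlenbeck semigroup on Gaussian space. A preliminary reduction truncates the series to $F_J := \sum_{j=1}^J P_j(\bar g)$, which is a genuine polynomial of degree at most $k$ in finitely many real Gaussians (complex-valued coefficients being handled by splitting real and imaginary parts). Once the hypercontractive bound is established uniformly in $J$, the passage $J \to \infty$ is a soft limit argument provided $\sum_j P_j(\bar g)$ is summable in $L^2$.

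The central object is the Ornstein--Uhlenbeck semigroup on $L^2$ of Gaussian space,
\[
P_t f(x) = \E\bigl[ f(e^{-t} x + \sqrt{1-e^{-2t}}\, Y) \bigr],
\]
with $Y$ an independent standard Gaussian vector. I would invoke two facts: (i) by Mehler's formula, $P_t$ acts as multiplication by $e^{-nt}$ on the $n$-th homogeneous Wiener chaos (the closed span of Hermite polynomials of total degree $n$); (ii) Nelson's theorem, that $P_t : L^2 \to L^r$ is a contraction whenever $e^{2t} \geq r-1$. Statement (ii) is either cited from Chapter 5 of \cite{janson1997gaussian} or proved in the standard way by differentiating $\|P_t f\|_r^r$ in $t$ and closing the ODE via Gross's logarithmic Sobolev inequality; an alternative tensorization route goes through the two-point inequality of Bonami--Beckner and a central limit argument.

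With these tools, the conclusion is short. Decompose the polynomial $F = \sum_j P_j(\bar g)$ into its Wiener chaos components $F = \sum_{n=0}^k F_n$, and define the auxiliary
\[
G := \sum_{n=0}^k e^{nt} F_n,
\]
so that $P_t G = F$ by (i). Choosing $t$ with $e^{2t} = r-1$ (allowed since $r \geq 2$) and applying (ii) with $p=2$ gives $\|F\|_{L^r} = \|P_t G\|_{L^r} \leq \|G\|_{L^2}$. Orthogonality of distinct chaoses yields
\[
\|G\|_{L^2}^2 = \sum_{n=0}^k e^{2nt} \|F_n\|_{L^2}^2 \leq e^{2kt}\|F\|_{L^2}^2 = (r-1)^k \|F\|_{L^2}^2,
\]
which is the desired bound. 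Passing from $F_J$ back to the full sum closes the argument.

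The main obstacle is Nelson's inequality itself, which carries the real analytic content; everything else is bookkeeping in Wiener chaos. A subsidiary technical point is checking that $\sum_j P_j(\bar g) \in L^2$ so that the chaos decomposition $F = \sum_n F_n$ is legitimate in infinite dimensions (indeed, if the bound is to be non-vacuous the right-hand side is finite, so this poses no issue).
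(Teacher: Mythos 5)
Your argument is correct and is precisely the standard proof of this estimate: the paper itself gives no proof but simply cites Chapter 5 of Janson's book, whose argument is exactly your route (chaos decomposition, Mehler's formula for the Ornstein--Uhlenbeck semigroup, Nelson's $L^2\to L^r$ hypercontractive contraction for $e^{2t}\ge r-1$, then orthogonality of the chaoses to absorb the factor $e^{2kt}=(r-1)^k$). Nothing is missing; the only cosmetic point is that the $L^p$ on the left-hand side of the stated inequality is a typo in the paper for $L^r$, which you have implicitly corrected.
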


\begin{proposition}\label{prop:Nthordermeasure}
We set 
\begin{equation}\label{def:Nthordermeasure}
        R_N(\psi)=e^{-\mathcal{E}_N(P_N\psi)}.
\end{equation}
The sequence $(R_N)_N$ is bounded in $L^r(\mu)$ for any $r\in [1,\infty)$. Moreover, $R_N$ converges to some $R$ in $L^r(\mu)$ as $N\to+\infty$.
\end{proposition}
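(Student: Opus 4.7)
The plan is two-step: establish a uniform $L^r(\mu)$ bound on $(R_N)_N$, then upgrade the almost-sure convergence inherited from Corollary \ref{cor:CauchyLp} to $L^r(\mu)$-convergence via Vitali's theorem.

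The key structural input for the uniform bound is the non-negativity of $W$. Using the symmetry of $w$ and the definition of $\sigma_N$, one rewrites the renormalized energy \eqref{def:energie} as
\begin{equation*}
\mathcal{E}_N(\psi) = \bigl\langle |\psi|^2-\sigma_N,\, W(|\psi|^2-\sigma_N)\bigr\rangle_{L^2(\S^d)} + \bigl(E - 2 C'(\psi)\bigr),
\end{equation*}
where $C'(\psi) = \int w(x,y)\,\overline{\psi(x)}\,\sigma_N(x,y)\,\psi(y)\, dx\, dy$ and $E$ denotes the last (deterministic) summand appearing in \eqref{def:energie}. Since $W \geq 0$, the first term is non-negative, so
\begin{equation*}
\mathcal{E}_N(P_N\varphi) \geq E - 2 C'(P_N\varphi) = -2 Z_N,
\end{equation*}
where $Z_N := C'(P_N\varphi) - \E[C'(P_N\varphi)]$ is centered (indeed, a direct Wick-formula computation based on $\E[\lvert \varphi_N(x)\rangle\langle \varphi_N(y)\rvert] = \sigma_N(x,y)$ gives $\E[C'(P_N\varphi)] = E$) and lies in the second Wiener chaos of $(g_n)_n$. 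Consequently,
\begin{equation*}
\|R_N\|_{L^r(\mu)}^r = \E\bigl[e^{-r\mathcal{E}_N(P_N\varphi)}\bigr] \leq \E\bigl[e^{2r|Z_N|}\bigr].
\end{equation*}
I would then bound the right-hand side uniformly in $N$ by: (a) estimating $\|Z_N\|_{L^2(\mu)}$ via expansion in the basis $(e_n)_n$ and reusing the ingredients of Proposition \ref{prop;G_N Cauchy} (continuity $W : L^{p/2} \to L^q$, the Jacobi bound $\|e_l\|_{L^p} \lesssim \lambda_l^{2d/q}$, and summability under $q > 12d$); (b) invoking second-chaos hypercontractivity $\|Z_N\|_{L^p} \lesssim (p-1)\|Z_N\|_{L^2}$; and (c) using the quartic positivity a second time to absorb the positive-eigenvalue component of $Z_N$ into the non-negative term, so that the exponential moment $\E[e^{2r|Z_N|}]$ is finite uniformly in $N$ for every $r \in [1,\infty)$.

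For the convergence, Corollary \ref{cor:CauchyLp} provides $L^r(\mu)$-convergence of $\mathcal{E}_N\circ P_N$ to some $\mathcal{E}_\infty$, hence almost-sure convergence along a subsequence. Continuity of the exponential then gives $R_N \to R := e^{-\mathcal{E}_\infty}$ $\mu$-almost surely along that subsequence. Combined with the uniform $L^{r+1}(\mu)$-bound from the previous step, Vitali's convergence theorem upgrades this to $L^r(\mu)$-convergence along the subsequence. Since the almost-sure limit $R$ is uniquely determined as a function of $\varphi$, the full sequence $R_N$ converges to $R$ in $L^r(\mu)$.

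The main obstacle is the uniform bound valid for every $r \in [1,\infty)$. A generic second-chaos variable admits only sub-exponential tails, so the exponential moment $\E[e^{2r|Z_N|}]$ is a priori finite only for small $r$. Passing to arbitrary $r$ requires a subtle use of the quartic positivity beyond the naive lower bound $\mathcal{E}_N \geq -2Z_N$: one must absorb the dangerous eigencomponents of $Z_N$ back into the non-negative term $\langle |\psi|^2 - \sigma_N, W(|\psi|^2 - \sigma_N)\rangle_{L^2(\S^d)}$, so that the effective negative contribution controlling the tail behaves with the full strength of the hypothesis $q > 12d$.
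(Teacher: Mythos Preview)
Your convergence argument (uniform higher $L^r$ bound $+$ convergence in measure $\Rightarrow$ $L^r$ convergence) is essentially the paper's, and is fine once the uniform bound is established. The problem is the uniform bound itself.

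Your lower bound $\mathcal E_N(P_N\varphi)\geq E-2C'(P_N\varphi)$ does not equal $-2Z_N$: since $\E[C'(P_N\varphi)]=E$, one has $E-2C'(P_N\varphi)=-E-2Z_N$. More importantly, the quadratic form $C'$ is \emph{non-negative} (write $C'(\psi)=\E_{\varphi'}[\langle g,Wg\rangle]$ with $g=\langle\psi,\varphi'_N\rangle$ and use $W\ge 0$), so after diagonalisation $Z_N$ is a sum $\sum_j\mu_j(|h_j|^2-1)$ with all $\mu_j\ge 0$. The diagonal entry corresponding to $j=k=0$ is bounded below by $\lambda_0^{-4}A_{0,0,0,0}>0$ uniformly in $N$; hence the top eigenvalue of the associated matrix does not decay, and $\E[e^{2rZ_N}]=\prod_j(1-2r\mu_j)^{-1}e^{-2r\mu_j}$ diverges as soon as $2r\mu_{\max}\ge 1$. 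The second-chaos bound you invoke is therefore fatal, not a technicality.

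Your proposed fix (``absorb the positive-eigenvalue component of $Z_N$ into the quartic term'') can be made precise: bounding $C'(\psi)\le\langle|\psi|^2,W\sigma_N\rangle$ via $w\ge 0$ and then applying Young in the $W$-inner product gives, for any $r\ge 1$,
\[
2rC'(\psi)\le \tfrac12\,Q(\psi)+C_r\,\langle\sigma_N,W\sigma_N\rangle,
\]
which after reinsertion yields the \emph{deterministic} Nelson estimate $\mathcal E_N(\psi)\ge -C_r\langle\sigma_N,W\sigma_N\rangle$. This is exactly Lemma~\ref{lemma:nelsonestimates} in the paper, with $\langle\sigma_N,W\sigma_N\rangle\lesssim N^{3d/q}$. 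But this bound \emph{grows} with $N$, so by itself it does not give a uniform $L^r(\mu)$ bound on $R_N$.

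The missing ingredient is Nelson's trick: one combines the deterministic bound $-\mathcal E_N\le C_0N^{3d/q}$ with the tail estimate $\mu(|\mathcal E_N\circ P_N-\mathcal E_{N_0}\circ P_{N_0}|>\lambda)\le Ce^{-cN_0^{\nu/4}\lambda^{1/2}}$ coming from Corollary~\ref{cor:CauchyLp}. For a given level $\lambda$, one chooses $N_0\sim\lambda^{q/(3d)}$ so that the deterministic bound handles the event $\{-r\mathcal E_{N_0}>\lambda/2\}$, and the Cauchy tail handles $\{-r(\mathcal E_N-\mathcal E_{N_0})>\lambda/2\}$. The condition $q>12d$ enters precisely here, ensuring $\nu q/(12d)>1/2$ so that $\int e^{-c(\ln\alpha)^{1/2+\nu q/(12d)}}\,d\alpha<\infty$. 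Your proposal never invokes this interplay between the two estimates, and without it the uniform bound for all $r$ cannot be closed.
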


The first step is to prove a lesser bound on $\mathcal{E}_N$.

\begin{lemma}\label{lemma:nelsonestimates} 
    There exists $C_0>0$ such that \begin{equation}\label{eq:nelsonestimate}
        \mathcal{E}_N(\psi)\geq -C_0 N^{3d/q}.
    \end{equation}
\end{lemma}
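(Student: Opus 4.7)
The plan is to write $\mathcal{E}_N(\psi)$ as a nonnegative quadratic form plus a matrix cross term, dominate that cross term pointwise by its scalar analogue via Cauchy--Schwarz, and reduce the estimate to a purely spectral, $\psi$-independent bound. More precisely, I first rearrange
\[
\mathcal{E}_N(\psi)=\int w(x,y)(|\psi|^2-\sigma_N)(x)(|\psi|^2-\sigma_N)(y)\,dxdy-2\int w(x,y)\overline{\psi(x)}\sigma_N(x,y)\psi(y)\,dxdy+\int w(x,y)\Tr(\sigma_N(x,y)\sigma_N(y,x))\,dxdy.
\]
The first integral is $\geq 0$ since the nonnegative kernel $w$ makes $W$ a positive operator, and the third is $\geq 0$ as an integrated Hilbert--Schmidt norm squared, so only the middle term can contribute negatively. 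To control it I use the spectral expansion $\sigma_N(x,y)=\sum_n\lambda_n^{-2}|e_n(x)\rangle\langle e_n(y)|$ and apply Cauchy--Schwarz first in $\ell^2$ with weights $\lambda_n^{-2}$, then in $\C^{2^{\lfloor d/2\rfloor}}$ via $|\an{\psi,e_n}(x)|\leq|\psi(x)||e_n(x)|$, obtaining the pointwise bound $|\overline{\psi(x)}\sigma_N(x,y)\psi(y)|\leq|\psi(x)||\psi(y)|\sqrt{\sigma_N(x)\sigma_N(y)}$. AM--GM and the symmetry of $w$ then give $\bigl|2\int w(x,y)\overline{\psi(x)}\sigma_N(x,y)\psi(y)\,dxdy\bigr|\leq 2\int w(x,y)|\psi(x)|^2\sigma_N(y)\,dxdy$.

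Substituting this estimate back, dropping the nonnegative trace integral, and completing one more square via $\int w(x,y)(|\psi|^2-2\sigma_N)(x)(|\psi|^2-2\sigma_N)(y)\,dxdy\geq 0$ yields
\[
\mathcal{E}_N(\psi)\geq \int w|\psi|^4-4\int w|\psi|^2\sigma_N+\int w\sigma_N^2\geq -3\int w(x,y)\sigma_N(x)\sigma_N(y)\,dxdy.
\]
The lemma is thus reduced to estimating the $\psi$-independent quantity $\int w\sigma_N^2$ by $CN^{3d/q}$.

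For this last step, Hölder and the continuity $W:L^{p/2}\to L^q$ give $\int w\sigma_N^2=\an{\sigma_N,W\sigma_N}\leq\|W\|\,\|\sigma_N\|_{L^{q'}}\|\sigma_N\|_{L^{p/2}}$. The Szegő-type bound $\|e_n\|_{L^p}\lesssim\lambda_n^{2d/q}$ already invoked in the proof of Proposition \ref{prop;G_N Cauchy}, combined with Minkowski, gives $\|\sigma_N\|_{L^{p/2}}\lesssim\sum_n\lambda_n^{-2+4d/q}\lesssim N^{2d/q}$; log-convex interpolation between $L^1$ (where $\|\sigma_N\|_{L^1}\lesssim\log N$) and $L^{p/2}$ yields $\|\sigma_N\|_{L^{q'}}\lesssim N^{d/q}$ up to a logarithmic factor absorbed into $C_0$. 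The product is $\lesssim N^{3d/q}$, giving the desired $\mathcal{E}_N(\psi)\geq -C_0 N^{3d/q}$. The main obstacle is the double Cauchy--Schwarz that turns the matrix cross term into the scalar diagonal $|\psi(x)|^2\sigma_N(y)$; once this domination is in place, the remainder is elementary completing-the-square algebra and routine $L^p$-norm bookkeeping using the hypotheses on $W$ and the basis $e_n$.
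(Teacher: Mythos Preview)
Your argument follows essentially the same route as the paper's: the decomposition into the nonnegative quadratic form $\an{|\psi|^2-\sigma_N,W(|\psi|^2-\sigma_N)}$, the Cauchy--Schwarz domination of the matrix cross term $\overline{\psi(x)}\sigma_N(x,y)\psi(y)$ by $|\psi(x)||\psi(y)|\sqrt{\sigma_N(x)\sigma_N(y)}$, and the second completion of the square with $|\psi|^2-2\sigma_N$ are exactly what the paper does, and your reduction to $\mathcal E_N(\psi)\geq -3\int w(x,y)\sigma_N(x)\sigma_N(y)\,dxdy$ is correct.

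The only defect is in the final spectral bookkeeping. Your interpolation between $\|\sigma_N\|_{L^1}\lesssim\log N$ and $\|\sigma_N\|_{L^{p/2}}\lesssim N^{2d/q}$ gives $\|\sigma_N\|_{L^{q'}}\lesssim(\log N)^{1/2}N^{d/q}$, and a factor $(\log N)^{1/2}$ \emph{cannot} be ``absorbed into $C_0$'': the constant in the lemma must be independent of $N$. The paper avoids this loss by expanding $\sigma_N(x)\sigma_N(y)=\sum_{j,k\leq N}\lambda_j^{-2}\lambda_k^{-2}|e_j(x)|^2|e_k(y)|^2$ and estimating each term directly via $\an{W(|e_j|^2),|e_k|^2}\leq\|e_k\|_{L^2}\|W(|e_j|^2)e_k\|_{L^2}\lesssim\|e_j\|_{L^p}^2\|e_k\|_{L^p}$, which yields $\sum_{j}\lambda_j^{-2+4d/q}\sum_k\lambda_k^{-2+2d/q}\lesssim N^{2d/q}\cdot N^{d/q}$ with no logarithm. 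Replacing your last paragraph by this term-by-term estimate (or, equivalently, bounding $\|\sigma_N\|_{L^{q'}}$ via Minkowski and the Szeg\H{o} bound at exponent $2q'$ rather than by interpolation through $L^1$) closes the gap.
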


\begin{proof}
By definition, we have
\begin{multline*}
 \mathcal{E}_N(\psi)=  \int_{\S^d} dxdy w(x,y) \Big[ \lvert \psi (x)\rvert^2\lvert \psi(y)\rvert^2-2\lvert \psi (x)\rvert^2\sigma_N(y) 
      - 2\overline{\psi (x)}\sigma_N(x,y)\psi\\
      +\sigma_N(x)\sigma_N(y)+Tr(\sigma_N(x,y)\sigma_N(y,x))\Big].
\end{multline*}
We have that
\begin{align*}
\overline{\psi (x)}\sigma_N(x,y)\psi = & \E(\an{\psi(x),\varphi_N(x)}\an{\varphi_N(y),\psi(y)}) \\
\leq  & |\psi(x)|\E(|\varphi_N(x)|\,|\varphi_N(y)|) |\psi(y)| \\
\leq & |\psi(x)|\sqrt{\sigma_N(y)} \sqrt{\sigma_N(x)} |\psi(y)| \\
\leq &\frac12\Big( |\psi(x)|^2 \sigma_N(y) + |\psi(y)|^2\sigma_N(x)\Big).
\end{align*}
We use that $w$ has non negative values to get
\[
 \mathcal{E}_N(\psi)\geq  \int dxdy w(x,y) \Big[ \lvert \psi (x)\rvert^2\lvert \psi(y)\rvert^2-4\lvert \psi (x)\rvert^2\sigma_N(y) 
+\sigma_N(x)\sigma_N(y)+Tr(\sigma_N(x,y)\sigma_N(y,x))\Big].
\]
We use that 
\[
(|\psi(x)|^2 - 2\sigma_N(x))(|\psi(y)|^2 - 2\sigma_N(y)) = |\psi(x)|^2|\psi(y)|^2- 2|\psi(x)|^2\sigma_N(y) -2|\psi(y)|^2\sigma_N(x) + 4\sigma_N(x) \sigma_N(y)
\]
to get
\[
 \mathcal{E}_N(\psi)\geq  \int dxdy w(x,y)\Big[ (|\psi(x)|^2 - 2\sigma_N(x))(|\psi(y)|^2 - 2\sigma_N(y)) 
-3\sigma_N(x)\sigma_N(y)+Tr(\sigma_N(x,y)\sigma_N(y,x))\Big].
\] 
Because $W$ is a non negative operator, we get
\[
 \mathcal{E}_N(\psi)\geq  \int dxdy w(x,y) \Big[
-3\sigma_N(x)\sigma_N(y)+Tr(\sigma_N(x,y)\sigma_N(y,x))\Big].
\] 
And again by non negativity of $w$ and the fact that $\sigma_N(y,x) = \sigma_N(x,y)^*$, we have
\[
 \mathcal{E}_N(\psi)\geq  \int dxdy w(x,y) \Big[
-3\sigma_N(x)\sigma_N(y)\Big].
\]
We deduce that
\[
 \mathcal{E}_N(\psi)\geq -3 \sum_{j ,k\leq N } \frac1{\lambda_j^2\lambda_k^2} \an{W(|e_j|^2) , |e_k(y)|^2}.
\]
We have by Cauchy--Schwarz that 
\[
 \an{W(|e_j|^2) , |e_k(y)|^2} \leq \|e_k\|_{L^2} \|W(|e_j|^2)e_k\|_{L^2} \leq \|e_k\|_{L^p} \|e_j\|_{L^p}^2 .
\]
We deduce 
\begin{align*}
 \mathcal{E}_N(\psi)\geq & -3 \sum_{j , k\leq N } \lambda_j^{-2 + 4d/q} \lambda_k^{-2 + 2d/q} \\
 \geq  & - C N^{3d/q}.
\end{align*}

\end{proof}

\begin{proof}[Proof of Proposition \ref{prop:Nthordermeasure}]
    We have that by using Lemma 4.5 in \cite{Tzvetkov2006ConstructionOA} and inequality \eqref{eq:cauchyboundlp} in Corollary \ref{cor:CauchyLp} for $M\geq N$, 
    \begin{equation}\label{eq:ineqmu}
        \mu(\lvert \mathcal{E}_M(P_M\psi)-\mathcal{E}_N(P_N\psi)\rvert>\lambda)\leq Ce^{-cN^{\frac{\nu}{4}}\lambda^\frac{1}{2}}.
    \end{equation}

    And we have 
 \[
 \|R_N\|_{L^r(\mu)}^p=\int_{H^s}e^{-r\mathcal{E}_N(P_N\psi)}d\mu(\psi)=\int_0^\infty \mu(e^{-r\mathcal{E}_N(P_N\psi)}>\alpha)d\alpha\leq 1+\int_1^\infty \mu(-r\mathcal{E}_N(P_N\psi)>\ln\ \alpha)d\alpha.
 \]
    
    We claim that under our hypothesis $\int_1^\infty \mu(e^{-p\mathcal{E}_N(P_N\psi)}>\alpha)d\alpha$ is finite. Indeed, we set $\lambda=\ln\alpha$ and $N_0:=\lfloor \Big(\frac{\lambda}{2rC_0}\Big)^{q/(3d)}\rfloor$ such that $\lambda\geq 2rC_0 N_0^{3d/q} $, with $C_0$ defined in Lemma \ref{lemma:nelsonestimates}. 
    
If $N\leq N_0$, we have that 
\[
-r\mathcal E_N(P_N\psi) \leq  rC_0 N^{3d/q} < \lambda
\]
and thus 
 \[
  \mu(-r\mathcal{E}_N(P_N\psi)>\lambda) = 0.
 \]
    
Let $N\geq N_0$, since   $-r\mathcal E_{N_0}(P_{N_0}\psi) \leq  rC_0 N_0^{3d/q} \leq \frac{\lambda}{2}$, we have 
\[
  \mu(-r\mathcal{E}_N(P_N\psi)>\lambda) \leq \mu (-r\mathcal{E}_N(P_N\psi) + r\mathcal E_{N_0}(P_{N_0}\psi) > \frac{\lambda}{2}).
\]
We deduce 
\[
  \mu(-r\mathcal{E}_N(P_N\psi)>\lambda) \leq C e^{-c N_0^{\nu/4} \lambda^{1/2}}.
\]
Using that as $\lambda$ goes to $\infty$, we have $N_0 \sim \lambda^{q/(3d)}$, we deduce 
\[
  \mu(-r\mathcal{E}_N(P_N\psi)>\lambda) \leq C e^{-c \lambda^{\nu q/(12d)} \lambda^{1/2}}.
\]
Since $\nu > \frac{6d}{q}$, we have that $\nu q/(12d) > 1/2$ and thus
\[
\| R_N\|_{L^r} \leq 1 + \int_1^{\infty} C e^{-c (\ln \alpha)^{1+}} d\alpha
\]
which is integrable. 
    
    Thus $R_N\in L^r(\mu)$ with a bound $C_r$ uniform in $N$. Since $(\mathcal{E}_N(P_N\psi))$ converges towards $\mathcal{E}(\psi)$ in $L^r$, we get that the convergence also holds in probability with respect to $\mu$.  Therefore, by composition of functions, $R_N(\psi)$ converges in probability to $R(\psi)=e^{-2\mathcal{E}(\psi)}$. And as a consequence  $R_N(\psi)$ converges to $R(\psi)=e^{-2\mathcal{E}(\psi)}$ in $L^p(\mu)$. 
    
    Indeed, let 
\[
A_{N,\varepsilon}=\{\lvert R_N(\psi)-R(\psi)\rvert\leq \varepsilon\},
\]
we have that $\mu(A^c_{N,\varepsilon})\to 0$ when $N\to+\infty$.

We then obtain that 
\[
\|R-R_N\|_{L^r(\mu)}\leq \|(R-R_N)\mathbf{1}_{A_{N,\varepsilon}}\|_{L^r(\mu)}+\|(R-R_N)\mathbf{1}_{A_{N,\varepsilon}^c}\|_{L^r(\mu)}.
\]

As $R$ and $R_N$ are in $L^{2r}$ with a bound $C_{2r}$ uniform in $N$, we have that 
\[
 \|R-R_N\|_{L^{2r}(\mu)}\leq 2C_{2r}.
 \]

By definition of $A_{N,\varepsilon}$, using Hölder's inequality one gets 
\[
\|R-R_N\|_{L^r(\mu)}\leq \varepsilon \mu(A_{N,\varepsilon})^\frac{1}{r}+2C_{2r}\mu(A_{N,\varepsilon}^c)\lesssim \varepsilon,
\]
for $N$ large enough. 
    
\end{proof}

\subsection{Renormalized nonlinearity}\label{subsec:renormalizednonlin}

In this section we work on the Nth ordered renormalized nonlinearity defined in \eqref{def:NthorderNL}. In particular, we prove the following result

\begin{proposition}\label{prop:truncnonlin} The sequence
    $(F_N)_N$ is a Cauchy sequence in $L^2(\mu,H^s(\S^d))$.
\end{proposition}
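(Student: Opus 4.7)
The plan is to mirror the proof of Proposition~\ref{prop;G_N Cauchy}, with an additional fourth index $n$ arising from a Parseval expansion in the $H^s(\S^d)$-norm.

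First, I would establish the Wick expansion
\[
F_N(\varphi) = P_N \sum_{j,k,l \le N} \frac{:\overline{g_j}\, g_k\, g_l:}{\lambda_j \lambda_k \lambda_l}\, W(\langle e_j, e_k\rangle)\, e_l,
\]
where $:\overline{g_j} g_k g_l: := \overline{g_j} g_k g_l - \delta_j^k g_l - \delta_j^l g_k$. Verifying this reduces to checking that the two counter-terms $\sigma_N(y) u(x)$ and $\sigma_N(x,y) u(y)$ appearing in the definition of $:W|u|^2 u:$ correspond exactly to the two pair-contractions of the monomial $\overline{g_j} g_k g_l$ (the contraction of $\overline{g_j}$ with $g_k$ and with $g_l$).

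Setting $A_{j,k,n,l} := \langle e_n, W(\langle e_j, e_k\rangle) e_l\rangle_{L^2}$ (a permutation of the kernel $A$ used in Proposition~\ref{prop;G_N Cauchy}), expanding $\langle e_n, F_N(\varphi)\rangle_{L^2}$ for $n \le N$ against the Wick basis and combining the Parseval identity $\|v\|_{H^s}^2 = \sum_n \lambda_n^{2s} |\langle e_n, v\rangle|^2$ with the orthogonality of Wick products, one deduces
\[
\|F_N - F_M\|_{L^2(\mu, H^s)}^2 \lesssim \sum_{(j,k,l,n) \in J_{M,N}} \lambda_n^{2s}\, \frac{|A_{j,k,n,l}|^2}{\lambda_j^2 \lambda_k^2 \lambda_l^2},
\]
where $J_{M,N} = \{(j,k,l,n) \in \{1,\dots,N\}^4 : \max(j,k,l,n) > M\}$.

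The key step is to split $J_{M,N}$ into two disjoint regimes: $\max(j,k,l) > M$, and $\max(j,k,l) \le M$ with $n > M$. In the first regime, the weight $\lambda_n^{2s}$ is uniformly bounded by $\lambda_0^{2s}$ for $s < 0$, so Parseval in $n$ together with the continuity of $W$ and the Szegő-type bound $\|e_l\|_{L^p} \lesssim \lambda_l^{2d/q}$ gives $\sum_n \lambda_n^{2s}|A_{j,k,n,l}|^2 \lesssim (\lambda_j\lambda_k\lambda_l)^{4d/q}$; the remaining three-index sum $\sum_{\max(j,k,l)>M}(\lambda_j\lambda_k\lambda_l)^{-2+4d/q}$ is controlled by the three-index version of the max-index trick of Proposition~\ref{prop;G_N Cauchy}, yielding a bound $\lesssim M^{-\nu/2}$ for any $\nu \in (0, 2 - 12d/q)$. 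In the second regime, $\lambda_n^{2s} \le M^s$ for $n > M$, so Parseval in $n$ gives $\sum_{n > M}\lambda_n^{2s}|A_{j,k,n,l}|^2 \lesssim M^s (\lambda_j\lambda_k\lambda_l)^{4d/q}$; summing over $(j,k,l) \in \{1,\dots,M\}^3$ uses $\sum_{j \le M}\lambda_j^{-2+4d/q} \lesssim M^{2d/q}$ (since the exponent lies in $(-2,0)$ for $q > 2d$), producing a contribution $\lesssim M^{s+6d/q}$.

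Combining both regimes, $\|F_N - F_M\|_{L^2(\mu, H^s)}^2 \lesssim M^{-\nu/2} + M^{s + 6d/q}$. The hypotheses $s < -1/2$ and $q > 12d$ force $6d/q < 1/2$ and hence $s + 6d/q < 0$, so both terms tend to zero as $M \to \infty$ and the Cauchy property follows. The hardest step is the second regime: because $\lambda_n^{2s}$ is not summable in $n$ for $s \in (-1, -1/2)$, a symmetric four-index max-index argument is unavailable, and one must use the pointwise $\lambda_n^{2s} \le M^s$ bound, which is precisely where the quantitative condition $s + 6d/q < 0$ enters.
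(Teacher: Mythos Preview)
There is a genuine gap in your Regime~1. After bounding $\lambda_n^{2s}$ by a constant and performing Parseval in $n$, you claim that the remaining three-index sum
\[
\sum_{\max(j,k,l)>M}(\lambda_j\lambda_k\lambda_l)^{-2+4d/q}
\]
is controlled by a ``three-index version'' of the max-index trick. But this sum diverges: since $q>12d$ each factor behaves like $j^{-1+2d/q}$ with exponent in $(-1,0)$, and $\sum_j j^{-1+2d/q}=\infty$; fixing any $l>M$ and summing freely over $j,k$ already gives infinity. The max-index trick of Proposition~\ref{prop;G_N Cauchy} cannot rescue this, because its mechanism is to sum out the maximal index \emph{via Parseval in $|A|^2$}, and you have already spent your only Parseval on the $n$-sum. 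No redistribution of weights can make a divergent sum of positive scalars finite.

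The root of the difficulty is your Parseval weight. The eigenvalue of $iD_d$ on $e_n$ is $\lambda_n^2=n+d/2$, so $\|e_n\|_{H^s}^2\sim\langle n\rangle^{2s}\sim\lambda_n^{4s}$, not $\lambda_n^{2s}$; this is exactly why the hypothesis reads $s<-\tfrac12$ rather than $s<-1$. With the correct weight the paper's approach is much simpler and avoids the two-regime split entirely: the condition $s<-\tfrac12$ gives $\lambda_n^{4s}\le\lambda_n^{-2}$, so after enlarging the index set to $J_{M,N}=\{(j,k,l,n)\in\N^4:\max\ge M\}$ one is bounding precisely the four-index sum of Proposition~\ref{prop;G_N Cauchy}, and the identical argument applies verbatim --- in particular, Parseval is always performed in the \emph{maximal} index (which may well be $n$), not systematically in $n$. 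Your Regime~2 computation is correct but becomes unnecessary.
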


We begin with a lemma. 

\begin{lemma}
    We have the following equality 
\[
F_N(\varphi_N)=\sum_{0\leq j,k,l\leq N}\frac{1}{\lambda_j\lambda_k\lambda_l}\bigg( W(\langle e_j,e_k\rangle) e_l:\overline{g_j}g_kg_l:\bigg),
\]
where $:\overline{g_j}g_kg_l:=\overline{g_j}g_kg_l-\delta_j^kg_l-\delta_j^lg_k$.
\end{lemma}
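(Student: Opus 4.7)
The plan is to substitute the series expansion of $\varphi_N$ into the definition of $F_N$ and recognize the three renormalization terms as exactly the contractions producing the Wick product. Since $\varphi_N\in E_N$, we have $P_N\varphi_N=\varphi_N$, so modulo the outer projection it suffices to compute $:W|\varphi_N|^2\varphi_N:(x)$ term by term. I recall the basic expansions
\[
\varphi_N(x)=\sum_{l\leq N}\frac{g_l}{\lambda_l}e_l(x),\qquad
|\varphi_N|^2(y)=\sum_{j,k\leq N}\frac{\overline{g_j}g_k}{\lambda_j\lambda_k}\langle e_j,e_k\rangle(y),
\]
together with $\sigma_N(y)=\sum_j\lambda_j^{-2}|e_j(y)|^2=\sum_{j,k}\delta_j^k\lambda_j^{-1}\lambda_k^{-1}\langle e_j,e_k\rangle(y)$ and $\sigma_N(x,y)=\sum_j\lambda_j^{-2}|e_j(x)\rangle\langle e_j(y)|$. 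Throughout the computation I would write the Kronecker factors $\delta_j^k$ and $\delta_j^l$ from the start so that every term is indexed by the same triple $(j,k,l)$, which makes the final collection immediate.

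Inserting these expansions into $\int dy\,w(x,y)|\varphi_N|^2(y)\varphi_N(x)$ produces $\sum_{j,k,l}\lambda_j^{-1}\lambda_k^{-1}\lambda_l^{-1}\overline{g_j}g_k g_l\,W(\langle e_j,e_k\rangle)(x)\,e_l(x)$, which is the unrenormalized main term. For the counterterm $\int dy\,w(x,y)\sigma_N(y)\varphi_N(x)$ the same scheme with the Kronecker-enriched form of $\sigma_N(y)$ gives $\sum_{j,k,l}\delta_j^k\lambda_j^{-1}\lambda_k^{-1}\lambda_l^{-1}g_l\,W(\langle e_j,e_k\rangle)(x)\,e_l(x)$. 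This provides the $\delta_j^k g_l$ contraction.

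The more delicate bookkeeping is the kernel term $\int dy\,w(x,y)\sigma_N(x,y)\varphi_N(y)$: using the braket form one has $\sigma_N(x,y)\varphi_N(y)=\sum_{j,k}\lambda_j^{-2}\lambda_k^{-1}g_k\langle e_j,e_k\rangle(y)\,e_j(x)$. Integrating against $w(x,y)$ and relabelling the summation index $j\mapsto l$ (which is the index carrying the basis function of $x$) gives $\sum_{l,k}\lambda_l^{-2}\lambda_k^{-1}g_k\,W(\langle e_l,e_k\rangle)(x)\,e_l(x)$, which I then rewrite as $\sum_{j,k,l}\delta_j^l\lambda_j^{-1}\lambda_k^{-1}\lambda_l^{-1}g_k\,W(\langle e_j,e_k\rangle)(x)\,e_l(x)$. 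This is precisely the $\delta_j^l g_k$ contraction. The asymmetry between the two counterterms reflects the fact that in $\overline{g_j}g_k g_l$ only the pairs $(j,k)$ and $(j,l)$ are contractible.

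Collecting the three contributions, the common factor $\lambda_j^{-1}\lambda_k^{-1}\lambda_l^{-1}W(\langle e_j,e_k\rangle)(x)\,e_l(x)$ is multiplied by
\[
\overline{g_j}g_k g_l-\delta_j^k g_l-\delta_j^l g_k,
\]
which is exactly $:\overline{g_j}g_k g_l:$. Applying the outer $P_N$ (which one may either absorb into the statement or keep explicit) yields the claimed identity. There is no real obstacle here beyond careful index bookkeeping; the only nontrivial point is being consistent with the bra-ket convention in the $\sigma_N(x,y)$ term, where the leftmost basis function is attached to $x$ and so must be relabelled to play the role of $e_l$.
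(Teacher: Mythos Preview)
Your proof is correct and follows essentially the same approach as the paper: expand each of the three terms in the definition of $:W|\varphi_N|^2\varphi_N:$ using the series for $\varphi_N$, $\sigma_N$ and $\sigma_N(x,y)$, and collect the resulting expressions as a single sum over $(j,k,l)$ with the Wick-ordered coefficient $\overline{g_j}g_kg_l-\delta_j^k g_l-\delta_j^l g_k$. Your treatment is in fact slightly more explicit than the paper's in the index bookkeeping (introducing the Kronecker deltas from the outset and carrying out the relabelling in the $\sigma_N(x,y)$ term), and your remark about the outer $P_N$ is a point the paper leaves implicit.
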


\begin{proof}
    We have that 
\[
F_N(\varphi_N)=\int dxdy w(x,y) \Big( \lvert \varphi_N\rvert^2(y) \varphi_N(x)- \varphi_N(x)\sigma_N(y)-\sigma_N(x,y)\varphi_N(y)\Big).
\]
We compute term by term and we have 
\[
\lvert \varphi_N\rvert^2(y) \varphi_N(x)=\sum_{j,k,l\leq N}\frac{\overline{g_j}g_kg_l}{\lambda_j\lambda_k\lambda_l}\langle e_j(y),e_k(y)\rangle e_l(x) 
\]
and
\[
\varphi_N(x)\sigma_N(y)=\sum_{j,l\leq N}\frac{g_l}{\lambda_j^2\lambda_l}\lvert e_j(y)\rvert^2e_l(x) 
\]
and 
\[
\sigma_N(x,y)\varphi_N(y)=\sum_{j,k\leq N}\frac{g_k}{\lambda_j^2\lambda_k}\langle e_j(y),e_k(y)\rangle e_j(x).
\]
Adding the three terms gives the result.
\end{proof}

\begin{proof}[Proof of proposition \ref{prop:truncnonlin}]
    We have that $$\|\ \|F_N(\psi)\|_{H^s} \|_{L^2(\mu)}^2\leq \sum_{n\in \Z}\langle n\rangle^{2s}\|\langle F_N(\psi),e_n\rangle\|_{L^2(\mu)}^2.$$

     And one has 
     \[
     \|\langle F_N(\psi),e_n\rangle\|_{L^2(\mu)}^2=\E[\lvert\langle F_N(\varphi_N),e_n\rangle\vert^2]
     \]

     then by the previous lemma, one gets that 
     \[
     \|\langle F_N(\psi),e_n\rangle\|_{L^2(\mu)}^2=\sum_{0\leq k,j,l,k',j',l'\leq N}\frac{1}{\lambda_j\lambda_k
     \lambda_l} \frac{1}{\lambda_{j'}\lambda_{k'}\lambda_{l'}}A_{j,k,l,n}\overline{A_{j',k',l',n}}\E[:\overline{g_j}g_kg_l:\,:\overline{g_{k'}}\overline{g_{l'}}g_{j'}:],  
     \]
     where we recall that $A_{j,k,l,m}=\int W\langle e_j,e_k\rangle\langle e_l,e_m\rangle.$

Using a similar argument as in the proof of Proposition \ref{prop;G_N Cauchy}, we get that for $N\geq M$,
\[
\|\ \|F_N(\psi) - F_M(\psi)\|_{H^s} \|_{L^2(\mu)}^2\lesssim \sum_{n\in \N}\sum_{j,k,l\in I_{N,M}}\frac{1}{\lambda_j^2\lambda_k^2\lambda_l^2\lambda_n^{-4s}}\lvert A_{j,k,l,n}\rvert^2
\]
where 
\[
I_{N,M} = \{(j,k,l) \in [|0,N|]^3\; |\; \max(j,k,l) \geq M\}.
\]
We deduce that
\[
\|\ \|F_N(\psi) - F_M(\psi)\|_{H^s} \|_{L^2(\mu)}^2\lesssim \sum_{j,k,l,n \in J_{N,M}}\frac{1}{\lambda_j^2\lambda_k^2\lambda_l^2\lambda_n^{-4s}}\lvert A_{j,k,l,n}\rvert^2
\]
where
\[
J_{N,M} = \{(j,k,l,n) \in \N^4\; |\; \max(j,k,l,n) \geq M\}.
\]
As $s<-\frac{1}{2}$, using the same arguments as in the proof of Proposition \ref{prop;G_N Cauchy}, we get the result announced, with a bound 
\[
\|\ \|F_N(\psi)-F_M(\psi)\|_{H^s} \|_{L^2(\mu)}\lesssim \frac{1}{M^{\frac{\nu}{2}}}.
\]

\end{proof}

\section{Existence of a solution for the renormalized Dirac equation}

In this section we prove Theorem \ref{th:mainth}. We fix $s<\frac{1}{2}$. We begin by constructing global in time dynamics for the truncated Wick equation \begin{equation}\label{eq:trunceq}
    i\partial_t \psi=iD_d\psi+F_N(\psi).
\end{equation}

We set the truncated Gibbs measure $\rho_N$ as $d\rho_N=Z_N^{-1}R_N(\psi)d\mu$, where $Z_N$ is a normalizing constant. %We show in the following that $\rho_N$ is invariant under the flow of \eqref{eq:trunceq}.

\begin{proposition}\label{prop:gwptrunc}
    The truncated Wick ordered equation \eqref{eq:trunceq} is globally well-posed in $H^s(\S^d)$. Moreover, $\rho_N$ is invariant under the flow of \eqref{eq:trunceq}.
\end{proposition}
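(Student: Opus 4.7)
The plan is to exploit the fact that \eqref{eq:trunceq} decouples on $E_N \oplus E_N^\perp$. Since $F_N(\psi)$ takes values in $E_N$ and depends only on $P_N\psi$, and since $D_d$ preserves both $E_N$ and $E_N^\perp$ (because the $e_n$ are eigenfunctions of $D_d$), the equation splits into the linear Dirac equation $i\partial_t v = i D_d v$ on $E_N^\perp$ and a finite-dimensional Hamiltonian ODE on $E_N$. Both global well-posedness and the invariance of $\rho_N$ will be established factor by factor.

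\textbf{Global well-posedness.} On $E_N^\perp$ the flow $e^{t D_d}$ is a unitary $C^0$-group on every $H^s(\S^d)$, hence global. On the finite-dimensional space $E_N$ the vector field $F_N$ is a polynomial in the coordinates $(u_n)_{n\le N}$, so Cauchy--Lipschitz provides local existence and uniqueness. To rule out finite-time blow-up, one verifies conservation of the $L^2$-mass $\|P_N\psi\|_{L^2}^2$: the anti-self-adjointness of $D_d$ kills the linear contribution to $\frac{d}{dt}\|P_N\psi\|^2$, and a direct computation using the symmetry $w(x,y) = w(y,x)$ together with $\sigma_N(y,x) = \sigma_N(x,y)^*$ shows that $\langle P_N\psi, F_N(\psi)\rangle \in \R$, which kills the nonlinear contribution. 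Since all norms are equivalent on $E_N$, the uniform bound on the $L^2$-mass forbids blow-up, and combining with the trivial high-mode dynamics yields a global flow on $H^s(\S^d)$.

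\textbf{Invariance of $\rho_N$.} Write $\mu = \mu_N \otimes \mu_N^\perp$ for the decomposition of $\mu$ along $E_N \oplus E_N^\perp$; since $\mathcal{E}_N$ depends only on the low modes, we have $\rho_N = \rho_N^{\mathrm{low}} \otimes \mu_N^\perp$ with $d\rho_N^{\mathrm{low}} = Z_N^{-1}e^{-\mathcal{E}_N(u)}\,d\mu_N(u)$. On $E_N^\perp$, the covariance of $\mu_N^\perp$ is $\sum_{n>N}\lambda_n^{-2}|e_n\rangle\langle e_n|$, which commutes with $D_d$, so the unitary flow $e^{tD_d}$ preserves $\mu_N^\perp$. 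On $E_N$ the ODE is Hamiltonian with Hamiltonian $H_N$; conservation of $H_N$ follows from $\partial_{\bar u}H_N \propto iD_d u + F_N(u)$ combined with the skew-Hermitian structure of multiplication by $-i$, and Liouville's theorem then ensures invariance of the real Lebesgue measure $du$ on the finite-dimensional symplectic space $E_N$. The density of $\rho_N^{\mathrm{low}}$ with respect to $du$ is proportional to $e^{-\beta H_N(u)}$ for the appropriate $\beta>0$ dictated by the Gaussian covariance; being a function of the conserved Hamiltonian, it is preserved by the flow. Combining, $\rho_N$ is invariant.

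\textbf{Main obstacle.} None of the individual steps is analytically hard—once the decoupling is set up, invariance reduces to the finite-dimensional Liouville theorem plus conservation of $H_N$. The most delicate piece is simply the bookkeeping of the normalization constants that relate the complex Gaussian density of $\mu_N$, the Hamiltonian $H_N$, and the inverse temperature $\beta$ in the Gibbs density $e^{-\beta H_N}du$, to ensure that the density of $\rho_N^{\mathrm{low}}$ indeed coincides with a conserved function of the Hamiltonian flow.
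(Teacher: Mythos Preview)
Your proof is correct, and the decoupling/invariance steps match the paper's argument essentially verbatim. The one genuine difference is the mechanism you use to rule out finite-time blow-up of the low-frequency ODE.

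The paper argues via conservation of the full Hamiltonian $H_N=\tfrac12\langle u,iD_du\rangle+\tfrac14\mathcal{E}_N(u)$: invoking the Nelson-type lower bound $\mathcal{E}_N(u)\ge -C_0N^{3d/q}$ (Lemma~\ref{lemma:nelsonestimates}), conservation of $H_N$ forces an a~priori bound on the kinetic term $\langle u,iD_du\rangle$, which is a squared norm on the positive spectrum, and equivalence of norms on $E_N$ does the rest. Your route is to observe directly that the $L^2$-mass $\|P_N\psi\|_{L^2}^2$ is conserved: the skew-adjointness of $D_d$ kills the linear contribution, and the symmetry $w(x,y)=w(y,x)$ together with $\sigma_N(y,x)=\sigma_N(x,y)^*$ makes $\langle P_N\psi,F_N(\psi)\rangle$ real, killing the nonlinear one. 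Your argument is strictly more elementary---it bypasses the Nelson estimate entirely and does not rely on the sign or structural assumptions on $W$ and its kernel $w$ that go into Lemma~\ref{lemma:nelsonestimates}. The paper's choice is natural only because that lower bound is needed anyway to construct $R_N$ in $L^r(\mu)$; once it is available, using it here costs nothing. But for the narrow purpose of globalizing the finite-dimensional flow, mass conservation is the cleaner tool.
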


\begin{proof}
    Let $\psi_N=P_N\psi$. Then \eqref{eq:trunceq} can be decomposed into the nonlinear evolution equation for $\psi_N$ on the low frequency part $\{n\leq N\}$ 
    \begin{equation}\label{eq:ODE1}
        i\partial_t \psi_N=iD_d\psi_N+F_N(\psi_N),
    \end{equation}

    and a linear ODE for each high frequency $n>N$\begin{equation}\label{eq:ODE2}
        i\partial_t (\psi-\psi_N)=iD_d (\psi-\psi_N).
    \end{equation}

    As a linear equation, any solution to \eqref{eq:ODE2} exists globally in time. By viewing \eqref{eq:ODE1} in the Fourier side, we see that \eqref{eq:ODE1} is a finite dimensional system of ODEs. The non-linearity $F_N$ being polynomial, it is Lipshitz-continuous. Hence by the Cauchy--Lipschitz theorem, we obtain local well-posedness of \eqref{eq:ODE1}.

    We recall that at fixed $N$, the potential energy $\mathcal E_N(u)$ is bounded by below. Since $H_N(u)$ is conserved by the flow, we deduce that as long as the solution $u$ is well-defined, the kinetic energy 
    \[
    \frac12 \an{u,iD_d u}_{L^2}
    \]
    is bounded. Since $u$ is restricted to the positive spectrum of $iD_d$, the kinetic energy is the square of a norm on $u$. Given that all norms are equivalent in finite dimension, this is sufficient to propagate the flow of \eqref{eq:ODE1} to all times and thus obtain \emph{global well-posedness} for \eqref{eq:ODE1}.
    
    Next we show that that $\rho_N$ is invariant under the flow of \eqref{eq:trunceq}. We write $\rho_N=\hat{\rho}_N\otimes\mu_N^\bot$, where $\mu_N^\bot$, is the Gaussian measure on the high frequencies $\{n>N\}$. The measure  $\mu_N^\bot$ is invariant under the flow of \eqref{eq:ODE2}. On the other hand, \eqref{eq:ODE1} is the finite dimensional Hamiltonian dynamics corresponding to $H^N_{Wick}(\psi_N)$ with 
 \[
H^N_{Wick}(\psi)=\frac{1}{2}\int_{\S^d}\overline{\psi}D\psi+\frac{1}{2}\int_{\S^d}\mathcal{E}_N(P_N\psi).
\]
 Thus, we that $\hat{\rho}_N$ is invariant under \eqref{eq:ODE1}. Therefore, $\rho_N$ is invariant under the flow of \eqref{eq:trunceq}.
\end{proof}

Following \cite{oh2018pedestrian}, we set the following definition

\begin{definition}
    Let $\Phi_N:H^s(\S^d)\to C(\R,H^s(\S^d))$ be the flow of \eqref{eq:trunceq} constructed in the proposition above. \\ For $t\in \R$, we denote $\Phi_N(t):H^s(\S^d)\to H^s(\S^d)$ the map defined by $(\Phi_N(t))(\psi)=(\Phi_N(\psi))(t)$. We endow $C(\R,H^s(\S^d))$ with the compact-open topology (that is to say the uniform convergence on all compact sets of $\R$). Under this topology, $\Phi_N$ is continuous from $H^s(\S^d)$ into $C(\R,H^s(\S^d))$. \\ We also denote 
\[
\nu_N=\rho_N\circ \Phi_N^{-1},
\]
that is to say that for any measurable function $F:C(\R,H^s(\S^d))\to \R$ we have 
\[
\int_{\C(\R,H^s(\S^d))}F(\psi)d\nu_N(\psi)=\int_{H^s(\S^d)}F(\Phi_N(\varphi))d\rho_N(\varphi).
\]
\end{definition}

We then show the following proposition 

\begin{proposition}\label{prop:sk+pr}
    There exists a subsequence $\nu_{N_j}$ that converges weakly to some probability mesure $\nu$ on $C(\R,H^s(\S^d))$. 

    Moreover, there exists another probability space $(\Tilde{\Omega},\Tilde{\mathcal F},\Tilde{P})$, a sequence $\psi_{N_j}$ of $C(\R,H^s(\S^d))$-valued random variables, and a $C(\R,H^s(\S^d))$-valued random variable $\psi$ such that \begin{itemize}
    \item the law of $\psi_{N_j}$ is $\nu_{N_j}$;
    \item the law of $\psi$ is $\nu$;
    \item the sequence $(\psi_{N_j})_j$ converges to $\psi$ in $C(\R,H^s(\S^d))$ almost surely with respect to $\Tilde{P}$.
    \end{itemize}
\end{proposition}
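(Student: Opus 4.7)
The plan is a standard tightness--compactness argument: I prove tightness of the family $(\nu_N)$ on $C(\R,H^s(\S^d))$ endowed with the compact-open topology, apply Prokhorov's theorem to extract a weakly convergent subsequence $\nu_{N_j}\to\nu$, and then invoke Skorokhod's representation theorem to produce the almost surely convergent realization on a new probability space. Since $H^s(\S^d)$ is Polish and $\R$ is $\sigma$-compact, $C(\R,H^s)$ (with uniform convergence on compacts) is Polish as the projective limit of the spaces $C([-T,T],H^s)$, so both theorems apply.

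For tightness, I fix an auxiliary exponent $s'\in(s,0)$ so that $H^{s'}\hookrightarrow H^s$ is compact and $\mu$ is still supported on $H^{s'}$. By an Aubin--Lions/Arzelà--Ascoli criterion, a subset of $C([-T,T],H^s)$ is relatively compact provided it is bounded in $L^\infty([-T,T],H^{s'})$ and equicontinuous as a subset of $C([-T,T],H^{s-1})$. For each $T$ and each $\eta>0$ I would construct such a compact set $K_{T,\eta}$ with $\nu_N(\{\psi:\psi|_{[-T,T]}\in K_{T,\eta}\})\ge 1-\eta\,2^{-T}$ uniformly in $N$; intersecting over $T=1,2,\dots$ then yields a compact subset of $C(\R,H^s)$ of $\nu_N$-mass at least $1-\eta$, as required.

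The two probabilistic inputs are (i) that the one-time marginals of $\nu_N$ all equal $\rho_N$ (invariance, Proposition \ref{prop:gwptrunc}) and (ii) that $\|R_N\|_{L^2(\mu)}$ is uniformly bounded (Proposition \ref{prop:Nthordermeasure}). Cauchy--Schwarz turns any $\mu$-moment bound on a function $f(u)$ into a uniform $\rho_N$-moment bound. For the spatial regularity this controls $\E_{\rho_N}\|u\|_{H^{s'}}^2$ uniformly in $N$, and Fubini gives a uniform bound on $\E_{\nu_N}\int_{-T}^T\|\psi(t)\|_{H^{s'}}^2\,dt$. For the time modulus, I differentiate using the truncated equation $\partial_t\psi_N=D_d\psi_N-iF_N(\psi_N)$: the first term is controlled by the boundedness of $D_d:H^s\to H^{s-1}$ and the second by combining the uniform $L^2(\mu,H^s)$ bound on $F_N$ (Proposition \ref{prop:truncnonlin}) with $\|R_N\|_{L^2(\mu)}$, yielding $\E_{\nu_N}\|\psi(t_2)-\psi(t_1)\|_{H^{s-1}}^2\lesssim|t_2-t_1|^2$ uniformly in $N$.

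The main obstacle I anticipate is turning pointwise-in-time moment estimates into supremum estimates on compact time intervals, since neither $\|\psi(t)\|_{H^{s'}}$ nor $\|\partial_t\psi(t)\|_{H^{s-1}}$ is a priori monotone in $t$. The cleanest remedy is to combine the time-averaged Fubini bound with a Kolmogorov--Chentsov continuity argument (the quadratic control of the increments is more than sufficient for Hölder regularity in $H^{s-1}$), or alternatively to interpolate between the $L^\infty_t H^{s'}$ bound and the Lipschitz-in-$H^{s-1}$ bound to recover equicontinuity in $H^s$ directly. Once tightness is established, Prokhorov's theorem on the Polish space $C(\R,H^s)$ and Skorokhod's representation theorem deliver the three bulleted conclusions.
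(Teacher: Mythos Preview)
Your overall scheme is the same as the paper's: tightness of $(\nu_N)_N$ on $C(\R,H^s)$ via invariance of $\rho_N$, the uniform $L^r(\mu)$ bound on $R_N$, and the uniform bound on $F_N$, followed by Prokhorov and Skorokhod. The ingredients you list are exactly the right ones.

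The one place where your sketch and the paper diverge is the ``obstacle'' you flag. You work with quadratic moments only and propose Kolmogorov--Chentsov to get H\"older paths in $H^{s-1}$; this is fine, but your compactness criterion then requires an $L^\infty_t H^{s'}$ bound, which you have only in $L^2_t$. Your suggested remedy (``interpolate between the $L^\infty_t H^{s'}$ bound and the Lipschitz-in-$H^{s-1}$ bound'') presupposes precisely the bound you are missing. The paper sidesteps this by working with $L^r$ moments for \emph{all} $r\ge 2$ (available because $R_N$ is bounded in every $L^r(\mu)$ and the Gaussian moments are controlled by hypercontractivity), obtaining uniform bounds on $\|\psi\|_{L^r_T H^{s_2}}$ and $\|\partial_t\psi\|_{L^r_T H^{s_2-1}}$, and then applying an interpolation lemma (Lemma~3.3 in \cite{burq2018remarks}) which gives $C^\alpha_T H^{s_1}$ directly from $L^r_T H^{s_2}\cap W^{1,r}_T H^{s_2-1}$ for $s<s_1<s_2<-\tfrac12$ and $r$ large. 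This is exactly the upgrade from time-averaged to uniform-in-time control that you are looking for. Your Kolmogorov--Chentsov route can be made to work too, but you should raise the moment order (you have all $L^r$ at your disposal) rather than stay at $r=2$.
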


\begin{proof}
    The proof is similar to the one in \cite{oh2018pedestrian} (Part 5.2 and 5.3), we recall the main element of the proof. 

    The proof relies on Prokhorov's theorem (p59 in \cite{billingsley2013convergence}) and Skorokhod's theorem (p70 in \cite{billingsley2013convergence}). The objectives is then to show that the family of measures $(\nu_N)_N$ is tight in $\mathcal C(\R,H^s)$, that is to say that for every $\varepsilon>0$, there exists a compact set $K_\varepsilon$ such that $\nu_N(K_\varepsilon^c)\leq \varepsilon,$ for all $N\in\N$.

    \textbf{Step 1:} Let $r\geq 1$, there exists a constant $C_r>0$ such that 

    \begin{equation}\label{eq:flowLpt}
        \|\ \|\psi\|_{L_T^rH^s}\|_{L^r(\nu_N)}\leq C_rT^{\frac{1}{r}},
    \end{equation}
    and 
    \begin{equation}\label{eq:flowLpt2}
        \|\ \|\psi\|_{\dot W_T^{1,r}H^{s-1}}\|_{L^r(\nu_N)}\leq C_rT^{\frac{1}{r}}.
    \end{equation}

Indeed, by Fubini's theorem and the definition of $\nu_N$ 
\[
\|\ \|\psi\|_{L_T^rH^s}\|_{L^r(\nu_N)}=\|\ \|\Phi_N(t)(\psi)\|_{L_T^rH^s}\|_{L^r(\rho_N)}=\|\ \|\Phi_N(t)(\psi)\|_{L_T^r(\rho_N)H^s}\|_{L^r},
\]
thus by the invariance of $\rho_N$ and Hölder's inequality
\[
\|\ \|\psi\|_{L_T^rH^s}\|_{L^r(\nu_N)}=(2T)^\frac{1}{r}\|\psi\|_{L_T^r(\rho_N)H^s}\leq(2T)^\frac{1}{r}\|R_N\|_{L^{2r}(\mu)}\|\psi\|_{L^{2r}(\mu)H^s},
\]
which gives the first inequality by estimates on $R_N$ and Lemma 2.6 of \cite{oh2018pedestrian}. 

The second inequality can be obtained in the same way, using the definition of $\nu_N$. Indeed we have 
\[
\|\ \|\psi\|_{\dot W_T^{1,r}H^{s-1}}\|_{L^r(\nu_N)}\leq \|\ \|D\psi\|_{L_T^{r}H^{s-1}}\|_{L^r(\nu_N)}+\|\ \|F_N(\psi)\|_{L_T^{r}H^{s-1}}\|_{L^r(\nu_N)}.
\]

The first term is estimated by \eqref{eq:flowLpt} and, reasoning as in the previous proof of \eqref{eq:flowLpt} one gets   
\[
\|\ \|F_N(\psi)\|_{L_T^{r}H^{s-1}}\|_{L^r(\nu_N)}\lesssim (2T)^\frac{1}{r}\|R_N\|_{L^{2r}(\mu)}\|F_N(\psi)\|_{L^{2r}(\mu)H^{s-1}}.
\]
Using Proposition \ref{prop:truncnonlin} gives \eqref{eq:flowLpt2}.

    \textbf{Second step:} We set $s<s_1<s_2<-\frac{1}{2}$ and for $\alpha\in(0,1)$, we consider $C_T^\alpha H^{s_1}=C^\alpha([-T,T],H^{s_1}(\S^d))$ defined by the norm 
\[
\|\psi\|_{\C_T^\alpha H^{s_1}}=\underset{t_1\neq t_2\in [-T,T]}{\sup}\frac{\|\psi(t_1)-\psi(t_2)\|_{H^{s_1}}}{\lvert t_1-t_2\rvert^\alpha}+\|\psi\|_{L_T^\infty H^{s_1}}.
\]
By Arzela Ascoli's theorem, the embedding $C_T^\alpha H^{s_1}\subset C_T H^{s} $ is compact. 

By Lemma 3.3 of \cite{burq2018remarks} (interpolation lemma), Sobolev's inequality and the first step, one obtains that for $r$ large enough 
\[
\|\|\psi\|_{\C_T^\alpha H^{s_1}}\|_{L^r(\nu_N)}\leq C_rT^\frac{1}{r}.
\]

We then define for $j\in\N $ the number $T_j=2^j$. We set for $\varepsilon \in (0,1)$,
\[
K_\varepsilon=\{ \psi\in C(\R,H^s(\S^d)); \|\psi\|_{\C_{T_j}^\alpha H^{s_1}}\leq c_0\varepsilon^{-1}T_j^{1+\frac{1}{p}} \text{ for all }j\in \N \}.
\]
By Markov's inequality and choosing $c_0$ large enough one gets 
    \[
    \nu_N(K_\varepsilon^c)<\varepsilon.
    \]
    Indeed, we have 
    \[
    K_\varepsilon^c = \bigcup_j \{ \psi \;|\; \| \psi\|_{\C_{T_j}^\alpha H^{s_1}} > c_0 \varepsilon^{-1}T_j^{1+1/r}\}.
    \]
    We deduce 
    \[
    \nu_N(K_\varepsilon^c) \leq \sum_j \nu_N(\{ \psi \;|\; \| \psi\|_{\C_{T_j}^\alpha H^{s_1}} > c_0 \varepsilon^{-1}T_j^{1+1/r}\}).
    \]
    We have 
    \[
    \nu_N(\{ \psi \;|\; \| \psi\|_{\C_{T_j}^\alpha H^{s_1}} > c_0 \varepsilon^{-1}T_j^{1+1/r}\}) = \nu_N(\{ \psi \;|\; \| \psi\|_{\C_{T_j}^\alpha H^{s_1}}^r > c_0^r \varepsilon^{-r}T_j^{r+1}\}).
    \]
    By Markov's inequality, we get
    \[
    \nu_N(\{ \psi \;|\; \| \psi\|_{\C_{T_j}^\alpha H^{s_1}} > c_0 \varepsilon^{-1}T_j^{1+1/r}\}) \leq c_0^{-r} \varepsilon^{r}T_j^{-r-1}  \|\,\| \psi\|_{\C_{T_j}^\alpha H^{s_1}}\|_{L^r(\nu_N)}^r.
    \]
    This yields 
    \[
    \nu_N(\{ \psi \;|\; \| \psi\|_{\C_{T_j}^\alpha H^{s_1}} > c_0 \varepsilon^{-1}T_j^{1+1/r}\}) \leq c_0^{-r}C_r^r \varepsilon^{r}T_j^{-r} .
    \]
    We sum on $j$ and get 
    \[
    \nu_N(K_\varepsilon^c) \leq 2 c_{0}^{-r}C_r^r \varepsilon.
    \]
    Taking $c_0 \geq  C_r 2^{1/r}$, we get the result.

    It remains to prove that $K_\varepsilon$ is compact in $C(\R,H^s(\S^d))$. Let $(u_n)_n$ be a sequence of $K_\varepsilon$. By definition of $K_\varepsilon$, the sequence $(u_n)$ is bounded in $C_{T_j}^\alpha H^{s_1}$ for any $j\in \N$. By a diagonal argument, there exists a subsequence $(u_{n_l})$ that converges in $C_{T_j}^\alpha H^{s_1}$ for any $j\in \N$. In particular, the sequence converges uniformly in $H^s$ on any compact time interval. Thus it converges in $C(\R,H^s(\S^d))$ and $K_\varepsilon$ is compact in $C(\R,H^s(\S^d))$.

    \textbf{Conclusion: }We showed that $(\nu_N)$ is a tight family of measures, thus, by Prokhorov's theorem it exists a subsequence $(\nu_{N_j})$ that converges weakly to some probability measure $\nu$ on $C(\R,H^s(\S^d))$. The Skorokhod's theorem allows us to conclude the proof of the theorem by giving the existence of the sequence $\psi_{N_j}$ and $\psi$. 
    
\end{proof}

We now prove the following proposition, which will conclude the proof of Theorem \ref{th:mainth}

\begin{proposition}
    Let $\psi_{N_j}$ and $\psi$ be as above. Then we have for all $t\in \R$, \begin{itemize}
        \item the law of $\psi_{N_j}(t)$ is $\rho_{N_j}$;
        \item the law of $\psi(t)$ is $\rho_\infty:=Z^{-1}R(\phi)d\mu$;
    \item the functions $\psi_{N_j}$ are solutions to the truncated Wick ordered equations \eqref{eq:trunceq} and $\psi$ is a solution in the distributional sense of \begin{equation}\label{eq:Wickeq}
        i\partial_t \psi =iD_d \psi + F(\psi). 
    \end{equation}
    \end{itemize}
\end{proposition}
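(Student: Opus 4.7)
The plan is to prove the three bullets in order, the last being where essentially all the work lies.

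\textbf{Laws at fixed time, and the truncated equations.} First I would combine the definition $\nu_{N_j} = \rho_{N_j} \circ \Phi_{N_j}^{-1}$ with the invariance of $\rho_{N_j}$ under $\Phi_{N_j}(t)$ established in Proposition \ref{prop:gwptrunc}: for any Borel $B \subset H^s(\S^d)$,
\[
\widetilde{P}(\psi_{N_j}(t) \in B) = \nu_{N_j}(\{f : f(t) \in B\}) = \rho_{N_j}(\Phi_{N_j}(t)^{-1} B) = \rho_{N_j}(B).
\]
For the law of $\psi(t)$, Proposition \ref{prop:Nthordermeasure} gives $R_N \to R$ in $L^1(\mu)$, so $Z_N \to Z$ and $\rho_N \to \rho_\infty$ in total variation; combined with the almost sure convergence $\psi_{N_j}(t) \to \psi(t)$ in $H^s$ provided by Skorokhod, any bounded continuous $f$ on $H^s$ satisfies $\widetilde{E}[f(\psi(t))] = \lim_j \int f\, d\rho_{N_j} = \int f\, d\rho_\infty$. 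The second bullet is immediate since $\nu_{N_j}$ is supported on trajectories of $\Phi_{N_j}$, so $\psi_{N_j}$ almost surely solves \eqref{eq:trunceq} at truncation $N_j$.

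\textbf{Passing to the limit in the equation.} Fix a test function $\Phi \in C_c^\infty(\R \times \S^d, \C^{2^{\lfloor d/2 \rfloor}})$ and set, for any $M$,
\[
L_M(\psi) := \int_{\R} \langle \psi(t), i\partial_t \Phi(t) + i D_d \Phi(t) \rangle_{L^2} \, dt - \int_{\R} \langle F_M(\psi(t)), \Phi(t) \rangle_{L^2} \, dt,
\]
with $L(\psi)$ defined analogously using $F := \lim_N F_N$ in $L^2(\mu, H^s)$ from Proposition \ref{prop:truncnonlin}. The identity $L_{N_j}(\psi_{N_j}) = 0$ almost surely is the weak formulation of \eqref{eq:trunceq}, and the goal is $L(\psi) = 0$ almost surely. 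I would decompose
\[
L(\psi) = \bigl[L(\psi) - L_M(\psi)\bigr] + \bigl[L_M(\psi) - L_M(\psi_{N_j})\bigr] + \bigl[L_M(\psi_{N_j}) - L_{N_j}(\psi_{N_j})\bigr]
\]
and treat each piece separately. The first and third brackets are pairings of $\Phi$ with differences of nonlinearities; by Fubini, the identification of the laws of $\psi(t)$ and $\psi_{N_j}(t)$ as $\rho_\infty$ and $\rho_{N_j}$, and the uniform $L^2(\mu)$ bound on $R_N$, Cauchy--Schwarz yields
\[
\widetilde{E}|L(\psi) - L_M(\psi)| + \sup_j \widetilde{E}|L_M(\psi_{N_j}) - L_{N_j}(\psi_{N_j})| \lesssim \|\Phi\|_{L^1_t H^{-s}_x} \bigl(\|F - F_M\|_{L^2(\mu, H^s)} + \sup_j \|F_M - F_{N_j}\|_{L^2(\mu, H^s)}\bigr),
\]
which tends to $0$ as $M \to \infty$ uniformly in $j$ by Proposition \ref{prop:truncnonlin}. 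For the middle bracket, at fixed $M$, $F_M$ factors through the finite-dimensional projection $P_M$ and is polynomial on its image, hence continuous from $H^s(\S^d)$ into itself; since $\psi_{N_j} \to \psi$ in $C(\R, H^s)$ almost surely and $\Phi$ has compact time support, $L_M(\psi_{N_j}) \to L_M(\psi)$ almost surely as $j \to \infty$. Letting $j \to \infty$ first and then $M \to \infty$ (extracting an almost-sure subsequence if needed from the $L^1$ estimates) gives $L(\psi) = 0$ almost surely for every $\Phi$, which is the distributional form of \eqref{eq:Wickeq}.

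\textbf{Main obstacle.} The delicate step is clearly the last one: the renormalized limit $F$ is defined only $\mu$-almost everywhere, so no pointwise limit $F_{N_j}(\psi_{N_j}(t)) \to F(\psi(t))$ can be taken directly. The argument bypasses this by comparing both $F$ and $F_{N_j}$, \emph{via the laws} of the random variables involved, to a fixed regularized $F_M$ which is genuinely continuous on $H^s$, and by using the uniform absolute continuity $\rho_{N_j} \ll \mu$ with $L^2$-densities bounded in $j$ to convert the $L^2(\mu)$-Cauchy property of Proposition \ref{prop:truncnonlin} into a uniform expectation bound along $(\psi_{N_j})$.
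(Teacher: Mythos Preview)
Your proposal is correct and follows essentially the same strategy as the paper: identify the time-marginal laws via invariance and Skorokhod convergence, then pass to the limit in the nonlinearity by interposing a fixed truncation $F_M$ (which is continuous on $H^s$ since it factors through $P_M$) and controlling the remainders $F-F_M$, $F_{N_j}-F_M$ through the $L^2(\mu)$-Cauchy property of Proposition~\ref{prop:truncnonlin}, transferred to $\rho_{N_j}$ and $\rho_\infty$ via the uniform $L^2(\mu)$ bound on the densities $R_N$. The only organisational difference is that the paper works with the $L^2_{\tilde P}L^2_T H^s$ norm of $F_{N_j}(\psi_{N_j})-F(\psi)$ and extracts an almost-sure subsequence at the end, whereas you pair against a fixed test function from the start; and your three-term split $[F-F_M]+[F_M(\psi)-F_M(\psi_{N_j})]+[F_M-F_{N_j}]$ is slightly more economical than the paper's four-term version (which first separates $F_{N_j}(u_j)-F(u_j)$ and then splits $F(u_j)-F(\psi)$ through $F_M$). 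One small imprecision: the phrase ``uniformly in $j$'' for $\sup_j\|F_M-F_{N_j}\|$ is not literally true for the finitely many $j$ with $N_j<M$, but since you take $j\to\infty$ first this is harmless.
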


\begin{proof}
    For the first part of the proposition, we fix some $t\in\R$, and $R_t:C(\R;H^s(\S^d))\to H^s(\S^d)$ defined by $R_t(\varphi)=\varphi(t)$. Then, $R_t$ is continuous and we have by Proposition \ref{prop:gwptrunc} the law of $\psi_{N_j}(t)$ is $\nu_{N_j}\circ R_t^{-1} = \rho_{N_j}$.

By Proposition \ref{prop:sk+pr}, $\psi_{N_j}(t)$ converges to $\psi(t)$ in $H^s(\S^d)$ almost surely. So by dominated convergence theorem the $\psi_{N_j}(t)$ converges in law to $\psi(t)$. We deduce that the law of $\psi(t)$ is the limit of the sequence $(\rho_{N_j})_j$, that is $\rho_\infty$.

    For the second part of the proof, we set $X_j$ the $D'_{t,x}$-valued random variable (where $D'_{t,x}:=D'(\R\times \S^d)$ defined by 
\[
X_j=i\partial_t \psi_{N_j}-iD_dD\psi_{N_j}-F_N(\psi_{N_j}).
\] 
As $\psi_{N_j}$ is of law $\nu_{N_j}$ and by definition of $\nu_{N_j}$, the variable $X_j$ is almost surely $0$. In other words, $\psi_{N_j} $ is a solution to \eqref{eq:trunceq}, in the sense of distribution.

By almost sure convergence of $\psi_{N_j}$ in $C(\R,H^s(\S^d))$ we have 
\[
i\partial_t\psi_{N_j}-D\psi_{N_j}\to i\partial_t\psi-D\psi 
\]
in $D'_{t,x}$ as $j\mapsto\infty$ almost surely with respect to $\Tilde{P}$. 

We conclude by showing the almost sure convergence of $F_{N_j}(\psi_{N_j})$ to $F(\psi)$. To simplify we denote $F_j=F_{N_j}$ and $u_j=\psi_{N_j}$. We have 
\[
F_j(u_j)-F(\psi)=F(u_j) - F(\psi) + F_j(u_j) - F(u_j)
\]
We have that
\[
\|F_j(u_j) - F(u_j)\|_{L^2_{\tilde P},L^2_T,H^s} = \|(1-\Delta)^{s/2} (F_j(u_j) - F(u_j))\|_{L^2_{t,x},L^2_{\tilde P}}
\]
We use the definition of $u_j$ to get 
\[
\|F_j(u_j) - F(u_j)\|_{L^2_{\tilde P},L^2_T,H^s} = \|(1-\Delta)^{s/2} (F_j(X) - F(X))\|_{L^2_{t,x},L^2_{\nu_{N_j}}}
\]
We use that $\rho_{N_j}$ is invariant under the flow $\Phi_{N_j}$ to get
\[
\|F_j(u_j) - F(u_j)\|_{L^2_{\tilde P},L^2_T,H^s} = \sqrt T \|(1-\Delta)^{s/2} (F_j(X) - F(X))\|_{L^2_{x},L^2_{\rho_{N_j}}}
\]
We use the definition of $\rho_{N_j}$ to get
\[
\|F_j(u_j) - F(u_j)\|_{L^2_{\tilde P},L^2_T,H^s} \leq  \sqrt T \|R_{N_j}\|_{L^2_\mu} \|(1-\Delta)^{s/2} (F_j(X) - F(X))\|_{L^2_{x},L^4_{\mu}}
\]
that is to say
\[
\|F_j(u_j) - F(u_j)\|_{L^2_{\tilde P},L^2_T,H^s} \leq  \sqrt T \|R_{N_j}\|_{L^2_\mu} \|F_j(X) - F(X)\|_{L^4_{\mu},H^s}
\]
We use the boundedness of $(R_N)_N$, the convergence of $(F_N)_N$ along with the hypercontractivity corolary \ref{cor:CauchyLp} to conclude that 
\[
\|F_j(u_j) - F(u_j)\|_{L^2_{\tilde P},L^2_T,H^s} \rightarrow 0.
\]

Let $M>0$. In the same way as before, we have that 
\begin{align*}
\|F_M(u_j) - F(u_j)\|_{L^2_{\tilde P},L^2_T,H^s} \leq &  \sqrt T \|R_{N_j}\|_{L^2_\mu} \|F_M(X) - F(X)\|_{L^4_{\mu},H^s} \\
\|F_M(\psi) - F(\psi)\|_{L^2_{\tilde P},L^2_T,H^s} \leq & \sqrt T \|R\|_{L^2_\mu} \|F_M(X) - F(X)\|_{L^4_{\mu},H^s}.
\end{align*}
Since $(R_N)_N$ is bounded, we get that
\begin{align*}
\|F_M(u_j) - F(u_j)\|_{L^2_{\tilde P},L^2_T,H^s} \leq & C \sqrt T \|F_M(X) - F(X)\|_{L^4_{\mu},H^s} \\
\|F_M(\psi) - F(\psi)\|_{L^2_{\tilde P},L^2_T,H^s} \leq  & C \sqrt T  \|F_M(X) - F(X)\|_{L^4_{\mu},H^s}.
\end{align*}
Let $\varepsilon >0$ and pick $M$ such that $C \sqrt T \|F_M(X) - F(X)\|_{L^2_{\mu},H^s} \leq \varepsilon$, we have that
\[
\|F(u_j) - F(\psi)\|_{L^2_{\tilde P},L^2_T,H^s}  \leq 2\varepsilon + \|F_M(u_j) - F_M(\psi)\|_{L^2_{\tilde P},L^2_T,H^s}.
\]
We have that there exists a constant $C_M$ such that 
\[
\| F_M(X) - F_M(Y) \|_{H^s} \leq C_M (1+ \sup |P_M X| + \sup |P_M Y|) \sup |P_M X- P_M Y|.
\]
Because $P_M$ projects in finite dimension where all norms are equivalent, we get
\[
\| F_M(X) - F_M(Y) \|_{H^s} \leq C_M (1+ \|X\|_{H^s} + \|Y\|_{H^s})  \| X-  Y\|_{H^s}.
\]
We deduce that
\[
\|F_M(u_j) - F_M(\psi)\|_{L^2_{\tilde P},L^2_T,H^s} \leq C_M (1+ \|u_j\|_{L^4_{\tilde P},L^2_T,H^s} + \|\psi\|_{L^4_{\tilde P},L^2_T,H^s})  \| u_j -  \psi \|_{L^4_{\tilde P},L^2_T,H^s}.
\]
Fatou's lemma yields that $\psi \in L^4_{\tilde P},L^2_T,H^s$. Almost sure convergence yields convergence in probability. As we have already seen, convergence in probability along with boundedness in $L^4$ yields convergence in $L^4$. We deduce that 
\[
\|F(u_j) - F(\psi)\|_{L^2_{\tilde P},L^2_T,H^s} 
\]
converges to $0$ as $j$ goes to $\infty$.

    Then, up to a subsequence, $F_j(u_j)$ converges to $F(\psi)$ in $L^2_TH^s$ almost surely with respect to $\Tilde{P}$. We apply this to $T_l=2^l$ for any $l\in\N$. Then, by a diagonal argument, up to a subsequence, $F_j(u_j)$ converges to $F(\psi)$ in $L^2_{loc}H^s$ almost surely with respect to $\Tilde{P}$. In particular, up to a subsequence, $F_j(u_j)$ converges to $F(\psi)$ in $D'_{t,x}$ almost surely with respect to $\Tilde{P}$, and $\psi$ is a global-in-time solution to \eqref{eq:Wickeq}.
\end{proof}

\newpage

\bibliographystyle{amsplain}
\bibliography{biblio}

\end{document}